\newtheorem{theorem}{Theorem}[section]
\newtheorem*{theorem*}{Theorem}
\newtheorem{lemma}[theorem]{Lemma}
\newtheorem{proposition}[theorem]{Proposition}%[subsection]
\theoremstyle{remark}
\newtheorem{remark}{Remark}%[subsection]
\newtheorem{hypothesis}{Hypothesis}
\newcommand{\lcm}{\mathrm{lcm}}
\newcommand{\bP}{\mathbb{P}}
\title{Divisor problems for restricted Fourier coefficients of modular forms}
\author{Yuk-Kam Lau}
\address{
Weihai Institute for Interdisciplinary Research, Shandong University, China \mbox{\rm and} 
Department of Mathematics, The University of Hong Kong, Pokfulam Road, Hong Kong}
\email{yklau@maths.hku.hk}
\author{Wonwoong Lee}
\address{Department of
 Mathematics, The University of Hong
 Kong, Pokfulam Road, Hong Kong}
\email{leeww041@hku.hk}
\date{\today}
\subjclass[2020]{11F30, 11N37}
\keywords{Divisor function, Fourier coefficients of modular forms, Sato-Tate conjecture, Chebotarev density theorem}
\thanks{}
\date{\today}
\begin{document}

\begin{abstract}
    Let $d(n)$ be the number of divisors of $n$. We investigate the average value of $d(a_f(p))^r$ for $r$ a positive integer and $a_f(p)$ the $p$-th Fourier coefficient of a cuspidal eigenform $f$ having integral Fourier coefficients, where $p$ is a prime subject to a constraint on the angle associated with the normalized Fourier coefficient. 
\end{abstract}

\maketitle

\section{Introduction and results}

\subsection{Introduction}

Throughout the paper, we denote by $p$ a rational prime number. The {\it Titchmarsh divisor problem}, which was first studied by Titchmarch in 1930 \cite{Ti30} concerns the following problem: for a sequence of integers $(n(p))_{p \in \bP}$, estimate  
\begin{align*}
    \sum_{p \leq x}d(n(p))
\end{align*}
as precisely as possible, where $d(\cdot)$ denotes the divisor function which counts the number of divisors and $\bP$ is the set of all rational primes. Titchmarch \cite{Ti30} considered this problem for $n(p)=p-a$ for a non-zero integer $a$ and obtained a result under the Grand Riemann Hypothesis ({\it GRH} in short). This result was proved unconditionally by Linnik \cite{Li63}. Their result is 
\begin{align*}
     \sum_{p \leq x}d(p-a) \sim C_ax, \quad \text{ as } x \rightarrow \infty,
\end{align*}
where $C_a$ is a constant depending on $a$. In comparision with the prime number theorem, 
\begin{align*}
    \pi(x):=\sum_{p \leq x}1 \sim \frac{x}{\log x}, \quad \text{ as } x \rightarrow \infty,
\end{align*}
we can see that the size of $d(p-a)$ is $C_a \log p$ on average. Let us express the mean value of a function $f$ on a finite set $M$ as the expected value (of $f(X)$) associated with the random variable $X$ of the uniform distribution on $M$,
\begin{align*}
    \mathbb E_M[f(X)]=\frac{1}{\# M}\sum_{m \in M}f(m). 
\end{align*}
Denoting $\prod(x)=[1,x]\cap \bP$, the result of Titchmarsh and Linnik reads as
\begin{align}\label{eqn_Titchmarsh original}
    \mathbb E_{\prod(x)}[d(X-a)]  \sim C_a \log x \quad \text{ as } x \rightarrow \infty.
\end{align}

In a similar vein, we let ${\rm N}(x)=[1,x]\cap \mathbb N$. One of the most classical results for the average value of $d(n)$ given by Dirichlet in 1838 \cite{Di38} can be stated as
\begin{align}\label{eqn_d(n) average}
    \mathbb E_{{\rm N}(x)}[d(X)]=\log x+(2\gamma-1)+O(x^{-1/2}).
\end{align}
More generally, for $r\in \mathbb N$, we have 
\begin{align}\label{eqn_d^r(n) average}
    \mathbb E_{{\rm N}(x)}[d(X)^r] \sim A_r(\log x)^{2^r-1}, \quad \text{ as } x \rightarrow \infty,
\end{align}
for some positive constant $A_r$, as shown by Wilson \cite{Wi23}. The generalization of this result to the average value of $d(F(x))$ for a polynomial $F$ has been attracting researchers' attention since the 1950s. For example, given an irreducible polynomial $F(x) \in \mathbb Z[x]$, Erd\"{o}s \cite{Er52} proved that
\begin{align}\label{eqn_d(F(n)) average Erdos}
    \mathbb E_{{\rm N}(x)}[d(F(X))] \asymp \log x,
\end{align}
and this was strengthened for quadratic polynomials $F$ to
\begin{align}\label{eqn_d(F(n)) quadratic polynomial}
    \mathbb E_{{\rm N}(x)}[d(F(X))] \sim \lambda \log x,
\end{align}
where $\lambda$ is given as an expression of the Hurwitz class number in \cite{Mc95} and \cite{Mc99}.

Motivated by these problems, there have been many studies on the behavior of the average value of $d(n(p))$ or $d(F(n))$. Akbary and Ghioca \cite{AG12} investigated a geometric version of this problem in the context of abelian varieties. Another result that is closely related to our interests is due to Pollack \cite{Po16}. For an elliptic curve $E$ over $\mathbb Q$, let $\prod_{\rm red}(x)$ be a subset of $\prod (x)$ consisting of $p$ such that $E$ has good reduction at $p$. Pollack conjectured that for a non-CM elliptic curve $E/\mathbb Q$, 
\begin{align}\label{eqn_Pollack conjecture d(E(f_p))}
    \mathbb E_{{\prod}_{\rm red}(x)} [d\left(\#E(\mathbb F_X)\right)] \sim c_E \log x, \quad \text{as } x \rightarrow \infty
\end{align}
for some constant $c_E>0$, and he proved that
\begin{align}\label{eqn_Pollack's theorem for ell}
    \mathbb E_{{\prod}_{\rm red}(x)}[d\left(\#E(\mathbb F_X)\right)] \asymp_E \log x
\end{align}
for $x \gg_E 1$ under GRH. Here, $\mathbb F_p$ denotes the finite field  of order $p$ up to isomorphism. Pollack used a refinement of Erd\"{o}s's idea in \cite{Er52}.

Pollack's argument has been generalized to cuspidal eigenforms by Chiriac \cite{Ch22}. Chiriac proved under GRH that if $f$ is a non-CM newform of weight $k \geq 2$ with integral Fourier coefficients $a_f(n)$, then for $\prod(x;f)$ a subset of $\prod(x)$ consisting of $p$ such that $a_f(p) \neq 0$, we have
\begin{align}\label{eqn_Chi upper bound}
    \mathbb E_{\prod(x;f)}[d(a_f(X))] \ll \log x.
\end{align}
This result improved upon the conditional upper estimate below by Gun and Murty \cite{GM14}:  under GRH, 
\begin{align*}
    \mathbb E_{\prod(x;f)}[d(a_f(X))] \ll (\log x)^{A_f},
\end{align*}
where $A_f$ is a constant depending on $f$. It is worthy to note that Chiriac's theorem is valid for a more general setting (\cite[Theorem 1.1]{Ch22}).

 Gun and Murty \cite{GM14} also obtained the lower estimate
\begin{align}\label{eqn_GM lower bound}
    \log x \ll \mathbb E_{\prod(x;f)}[d(a_f(X))],
\end{align}
assuming GRH. Combining \eqref{eqn_Chi upper bound} and \eqref{eqn_GM lower bound}, we obtain the analogue of \eqref{eqn_Pollack's theorem for ell} for $f$. Likewise for elliptic curves, Gun and Murty \cite{GM14} asked whether it is reasonable to expect 
\begin{align}\label{eqn_Gun Murty conjecture d(a_f(p))}
    \mathbb E_{\prod(x;f)}[d(a_f(X))] \sim B_f\log x, \quad \text{ as } x \rightarrow \infty
\end{align}
for some constant $B_f$ depending on $f$.

\subsection{Main result}\label{SMR} In this paper, we consider similar divisor problems concerning cuspidal eigenforms with a restriction on the angle associated with the normalized Fourier coefficients. 

Let $f$ be a cuspidal eigenform of weight $k\geq 2$ and level $N$, and let $a_f(n)$ be its $n$-th Fourier coefficient. Suppose that $a_f(n)$ are integers for all $n \in \mathbb N$. By the Ramanujan-Petersson bound for holomorphic newforms, which was proved by Deligne, we can express
\begin{align}\label{fc}
    \lambda_f(p):=a_f(p)/p^{\frac{k-1}{2}} = 2\cos \theta_f(p) \in [-2,2]
\end{align}
for some $\theta_f(p) \in [0,\pi]$. Let $I \subseteq [0,\pi]$ be an interval. By the Sato-Tate conjecture, which was proved in \cite{BGHT11} for non-CM holomorphic newforms, we have
\begin{align}\label{eqn_Sato Tate law}
    \#\{p \leq x : \theta_f(p) \in I\} \sim \mu_{ST}(I)\pi(x), \quad \text{ as } x \rightarrow \infty,
\end{align}
where
\begin{align*}
    \mu_{ST}(I):=\frac{2}{\pi}\int_I \sin^2 \theta d\theta 
\end{align*}
is the {\it Sato-Tate measure}. Denote by $\prod (x,I;f)$ the set of primes $p \leq x$ such that $a_f(p) \neq 0$ and $\theta_f(p) \in I$.
We shall prove the following. 
\begin{theorem}\label{thm_main theorem for modular forms}
    Let $r \in \mathbb N$ and $f$ be a non-CM newform of weight $k \geq 2$ with integer Fourier coefficients $a_f(n)$, $n\in \mathbb N$. Under {\it Hypothesis}~\ref{hypo_nice analytic properties of L-function} in \S~\ref{sec_L ftns on sym otimes chi} and GRH, 
        \begin{align}\label{t1}
            \mathbb E_{\prod (x,I;f)}[d(a_f(X))^r] \asymp_{f,r} (\log x)^{2^r-1}, 
        \end{align}
        as $x \rightarrow \infty$. Moreover, if $I=[0,\pi]$, then the same conclusion holds under GRH only.
\end{theorem}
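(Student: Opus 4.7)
My plan follows the Erd\H{o}s--Pollack--Chiriac paradigm, extended to higher moments and to the restricted-angle setting. The starting point is the multiplicative identity
\begin{equation*}
d(n)^r = \sum_{D \mid n} g_r(D), \qquad g_r(p^k) = (k+1)^r - k^r,
\end{equation*}
so that, after swapping the order of summation,
\begin{equation*}
\sum_{p \in \prod(x,I;f)} d(a_f(p))^r = \sum_{D \geq 1} g_r(D)\, \pi_f(x,I;D),
\end{equation*}
where $\pi_f(x,I;D):=\#\{p \leq x : a_f(p)\neq 0,\ D \mid a_f(p),\ \theta_f(p)\in I\}$. The Dirichlet series $\sum_D g_r(D)\, D^{-s}$ has a pole of order $2^r-1$ at $s=1$, and this is what will ultimately produce the $(\log x)^{2^r-1}$ in the final answer.

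The heart of the argument is a joint equidistribution estimate of the shape
\begin{equation*}
\pi_f(x,I;D) = \mu_{ST}(I)\, \delta_f(D)\, \pi(x) + E(x,D,I),
\end{equation*}
valid uniformly for $D \leq x^\eta$ for some fixed $\eta > 0$ with a power-saving error $E$; here $\delta_f(D)\asymp 1/D$ is the density, inside the image of the mod-$D$ Galois representation attached to $f$, of matrices with trace divisible by $D$. Two ingredients are combined. Effective Chebotarev on the division fields $\mathbb{Q}(f[D])$ under GRH yields an $O(D^c x^{1/2}\log^2 x)$ bound for the divisibility condition. A Beurling--Selberg style truncation is then used to detect $\theta_f(p)\in I$, which introduces $L$-functions of the form $L(s,\mathrm{Sym}^m f\otimes \chi)$ for finite-order characters $\chi$ arising from the mod-$D$ Galois image; Hypothesis~\ref{hypo_nice analytic properties of L-function} supplies precisely the analytic continuation, functional equation and (conditional) GRH needed for these twists. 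When $I=[0,\pi]$ no angle detection is required and plain GRH suffices, which accounts for the second assertion.

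With joint equidistribution in hand, the remainder is analytic. Truncating at $y=x^\eta$ produces the main term
\begin{equation*}
\mu_{ST}(I)\, \pi(x) \sum_{D\leq y} g_r(D)\, \delta_f(D) \asymp_r \pi(x) (\log x)^{2^r-1},
\end{equation*}
by partial summation from the pole order noted above. For the upper bound, the tail $D>y$ is controlled via Erd\H{o}s's trick: in any $r$-tuple of divisors of $a_f(p)$ at least one coordinate is $\leq |a_f(p)|^{1/r}\ll x^{(k-1)/(2r)}$, so symmetry together with the crude bound $g_r(D)\ll_\varepsilon D^\varepsilon$ reduces the tail to an equidistribution sum over a slightly enlarged but still accessible range. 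The lower bound is immediate from $d(n)^r \geq (\#\{d\mid n:d\leq y\})^r$, which yields the same main term on expansion. Dividing through by $\#\prod(x,I;f)\sim \mu_{ST}(I)\,\pi(x)$ then gives the theorem. The principal obstacle is the uniformity in $D$ of the error $E(x,D,I)$: pushing $D$ up to a positive power of $x$ while keeping a power-saving bound is exactly what Hypothesis~\ref{hypo_nice analytic properties of L-function} is designed to enable, and balancing $\eta$ so that the truncation is valid yet the range of $D$ captures the full $(\log x)^{2^r-1}$ factor is the delicate part.
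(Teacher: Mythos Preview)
Your lower bound and the joint equidistribution input are correct and essentially match the paper. The genuine gap is in the upper-bound tail.

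The claim that ``in any $r$-tuple of divisors of $a_f(p)$ at least one coordinate is $\le |a_f(p)|^{1/r}$'' is simply false (take $d_1=\cdots=d_r=n$). Even the correct symmetry bound $d(n)\le 2\,\#\{d\mid n:d\le\sqrt n\}$ only forces divisors down to $|a_f(p)|^{1/2}\ll x^{(k-1)/4}$, whereas the error $O(x^{3/4}\delta^6\cdots)$ in the joint Chebotarev--Sato--Tate estimate restricts equidistribution to moduli $\delta\le x^\eta$ with $\eta<1/24$. Since $(k-1)/4>1/24$ for every $k\ge2$, the ``slightly enlarged but still accessible range'' you invoke does not exist, and the crude bound $g_r(D)\ll_\varepsilon D^\varepsilon$ buys nothing: the tail $\sum_{D>y}g_r(D)\,\pi_f(x,I;D)$ still runs over moduli on which you have no control of $\pi_f(x,I;D)$ whatsoever.

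The paper closes this gap with the full Erd\H{o}s device, packaged as Theorem~\ref{thm_main theorem general}. One writes $a_f(p)=p_1\cdots p_J$ with $p_1\le\cdots\le p_J$ and takes the largest $j$ with $p_1\cdots p_j\le x^c$. Either $J-j$ is bounded, in which case $d(a_f(p))^r\ll d(p_1\cdots p_j)^r$ and every divisor of $p_1\cdots p_j$ already lies below $x^c$; or $p_1\cdots p_j$ is simultaneously $\ge x^{c/4}$ and $x^{1/s}$-smooth, and then Wolke--Shiu type estimates for multiplicative functions supported on smooth numbers (Lemma~\ref{lem_Wolke lemma}, Propositions~\ref{prop_lcm number hypo for alpha is 2 to the r} and~\ref{prop4.4}), combined with the multivariable Tauberian bounds of Propositions~\ref{prop_1/phi(lcm) formula} and~\ref{prop_lcm/phi(lcm)^2 formula}, show that this contribution is negligible. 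This reduction of \emph{all} divisors to moduli $\le x^c$ is exactly the missing step in your sketch; without it the upper bound does not close.
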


Here, {\it Hypothesis}~\ref{hypo_nice analytic properties of L-function} concerns certain analytic properties of the $L$-functions $L(s,\mathrm{Sym}^m f \otimes \chi)$, which are expected  under the Langlands reciprocity conjecture; see \S\ref{sec_L ftns on sym otimes chi} for details.

\begin{remark}
    \label{conj_main conjecture} 
    (1) The constants in $\asymp_{f,r}$ are independent of $I$.%but the range of validity for $x$ (i.e. the value of $x_0$ for which \eqref{t1} holds for all $x\ge x_0$) depends on $I$.  
    (2) As in \eqref{eqn_Gun Murty conjecture d(a_f(p))}, 
    we speculate the following asymptotic formula: as $x \rightarrow \infty$, 
    \begin{align*}
        \mathbb E_{\prod (x,I;f)}[d(a_f(X))^r] \sim B_{f,r,\mu_{ST}(I)}(\log x)^{2^r-1} 
    \end{align*}
    for some  constant $B_{f,r,\mu_{ST}(I)}>0$ depending on $f$, $r$, and $\mu_{ST}(I)$.
This is currently out of our reach.
\end{remark}

\begin{remark}
For the proof of Theorem~\ref{thm_main theorem for modular forms}, the specific instances of GRH that we require are as follows. We need GRH for Artin $L$-functions, which is used to derive the effective Chebotarev theorem and to prove \eqref{eqn_a(p)=0 proportion}. Moreover, for every natural number $\delta$, every finite Galois group  
$G = \mathrm{Gal}(K/\overline{\mathbb Q}) \subseteq \mathrm{GL}_2(\mathbb Z/\delta)$, 
and every irreducible representation $\rho$ of $G$ with character $\chi$ (including the trivial character), we assume GRH for $L(s,\mathrm{Sym}^m f \otimes \chi)$.  
\end{remark}

To establish Theorem~\ref{thm_main theorem for modular forms}, we shall need two auxiliary results, which are of independent interests. 
First, as in \cite{Ch22} and \cite{Po16}, we extend Erd\"os' result for our purposes but in a somewhat more general setting. Second, we invoke an effective version of the Chebotarev-Sato-Tate conjecture,  conditionally deduced from Hypothesis~\ref{hypo_nice analytic properties of L-function} and GRH.   

\subsection{Auxiliary results}\label{SAR} Let $\{A_x\}_{x\in (0,\infty)}$ be a collection of finite sets such that $A_i \subset A_j$ for $i\le j$ and  $A:= \bigcup_x A_x$ is infinite. Let $n:A\to \mathbb N$ be a function, and for an integer $\delta \ge 1$, define
    \begin{align*}
        \pi(x,\delta):=\#\{a \in A_x: n(a) \equiv 0 \text{ (mod } \delta)\}.
    \end{align*}
Consider the following conditions.  
\begin{itemize}
    \item (U1) There exists $\beta>0$ such that $|n(a)|<x^{\beta}$ for every $a \in A_x$ and every $x>0$. 
    \item (U2) There is a positive real number $c_0$ such that 
    \begin{align*}
        \pi(x,\delta) \ll \frac1{\phi(\delta)}\# A_x
    \end{align*}
    for all integers $1\le \delta \leq x^{c_0}$ and for all $x>0$, where $c_0$ and the implied constant are independent of $A$. Here, as usual $\phi$ denotes the Euler totient function. 
    \item (L) There is a positive real number $c_1$ such that 
    \begin{align*}
        \pi(x,\delta) \gg \frac1{\delta}\# A_x
    \end{align*}
    for all integers $1\le \delta \leq x^{c_1}$ and for all sufficiently large $x$, where $c_1$ and the implied constant are independent of $A$.
\end{itemize}

\begin{theorem}\label{thm_main theorem general}
    Let the set $A$ and the function $n$ be defined as above and $r\ge 1$ be any integer. 
    \begin{enumerate}
        \item Assume {\normalfont (U1)} and  {\normalfont (U2)}. We have
        \begin{align*}
             \sum_{a \in A_x }d(n(a))^r \ll \#A_x\cdot (\log x)^{2^r-1},
        \end{align*}
        where the implied constant is independent of $A$ and $x$. 
        \item Assume  {\normalfont (L)}. For all sufficiently large $x$, we have
        \begin{align*}
             \sum_{a \in A_x }d(n(a))^r \gg \#A_x\cdot (\log x)^{2^r-1},
        \end{align*}
        where the implied constant is independent of $A$ and $x$. 
    \end{enumerate}
\end{theorem}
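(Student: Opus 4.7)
The strategy is a two-sided analysis built on the Dirichlet-convolution identity
\[
d(m)^r \;=\; \sum_{\delta \mid m} h_r(\delta),
\]
where $h_r := \mu \ast d^r$ is multiplicative with $h_r(p^k) = (k+1)^r - k^r \ge 0$ on prime powers; crucially $h_r(p) = 2^r - 1$, and this is the input that will produce the final exponent $2^r - 1$. Interchanging the order of summation transforms the target sum into
\[
\sum_{a \in A_x} d(n(a))^r \;=\; \sum_{\delta \ge 1} h_r(\delta)\, \pi(x,\delta).
\]

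For the upper bound, (U1) restricts the $\delta$-sum to $\delta \le x^\beta$, since $\delta \mid n(a)$ forces $\delta \le |n(a)| < x^\beta$. A termwise application of (U2) then gives
\[
\sum_{a \in A_x} d(n(a))^r \;\ll\; \#A_x \sum_{\delta \le x^\beta} \frac{h_r(\delta)}{\phi(\delta)}.
\]
For the lower bound, nonnegativity of $h_r$ lets us discard all terms with $\delta > x^{c_1}$ and then invoke (L):
\[
\sum_{a \in A_x} d(n(a))^r \;\ge\; \sum_{\delta \le x^{c_1}} h_r(\delta)\, \pi(x,\delta) \;\gg\; \#A_x \sum_{\delta \le x^{c_1}} \frac{h_r(\delta)}{\delta}.
\]

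The remaining step in both directions is the asymptotic $\sum_{\delta \le y} h_r(\delta)/g(\delta) \asymp_r (\log y)^{2^r - 1}$ for $g \in \{\mathrm{id},\phi\}$. The Dirichlet series $\sum_\delta h_r(\delta)/(g(\delta)\delta^s)$ has an Euler product whose local factor is $1 + (2^r-1)/(g(p) p^s) + O(p^{-2s-1})$, so it factors as $\zeta(s+1)^{2^r-1}$ times a function holomorphic and non-vanishing in a neighborhood of $\Re(s) = 0$; a Selberg--Delange or Perron extraction then produces the predicted $(\log y)^{2^r-1}$ main term with a positive constant, matching both the upper bound with $y = x^\beta$ and the lower bound with $y = x^{c_1}$.

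The main obstacle I foresee is one of bookkeeping: keeping the implied constants genuinely uniform in $A$, and in the upper bound reconciling the summation range $\delta \le x^\beta$ with the fact that (U2) is stated only for $\delta \le x^{c_0}$. The latter must be handled either by arranging $c_0 \ge \beta$ in the setups of interest, or by a dyadic-plus-divisor-domination argument using the monotonicity $\pi(x,\delta) \le \pi(x,\delta')$ for any $\delta' \mid \delta$ with $\delta' \le x^{c_0}$, combined with the moderate growth of $h_r$ on prime powers.
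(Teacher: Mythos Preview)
Your lower bound is correct and is essentially the paper's argument in different notation: your $h_r(\delta)$ is exactly the paper's $\Phi_r(\delta)$, the number of $r$-tuples with $\lcm = \delta$, so truncating to $\delta \le x^{c_1}$ and applying (L) reproduces the paper's computation with $\sum_{d_1,\ldots,d_r \le x^{c'}} 1/\lcm(d_1,\ldots,d_r)$.

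The upper bound, however, has a genuine gap that you yourself flag but do not actually close. The identity $\sum_{a\in A_x} d(n(a))^r = \sum_{\delta \le x^\beta} h_r(\delta)\,\pi(x,\delta)$ is fine, but (U2) only controls $\pi(x,\delta)$ for $\delta \le x^{c_0}$, and in the intended application one has $c_0 = 1/25$ while $\beta = (k-1)/2 + \varepsilon$, so $c_0 \ge \beta$ is simply false. Your second suggested patch, replacing $\pi(x,\delta)$ by $\pi(x,\delta')$ for some $\delta' \mid \delta$ with $\delta' \le x^{c_0}$, does not work as stated: the number of large $\delta$ sharing a given small $\delta'$ can be huge (think of $n(a)$ that is $z$-smooth with $z$ small, which has $\exp(c\log x/\log z)$ divisors, most of them above $x^{c_0}$), and the gain $1/\phi(\delta')$ is not enough to absorb $\sum_{\delta'\mid \delta \le x^\beta} h_r(\delta)$. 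The ``moderate growth of $h_r$ on prime powers'' gives nothing here because the relevant $\delta$ are typically squarefree with many prime factors, not prime powers.

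Bridging the range $x^{c_0} < \delta \le x^\beta$ is in fact the entire content of the paper's upper-bound argument. The paper follows Erd\H{o}s: for each $a$, factor $n(a)=p_1\cdots p_J$ with $p_1\le\cdots\le p_J$, pick the largest $j$ with $p_1\cdots p_j \le x^{c}$ (where $c=c_0/r$), and observe that $d(n(a))^r \le 2^{r(J-j)}\, d(p_1\cdots p_j)^r$. When $J-j$ is bounded this reduces everything to divisors $\le x^{c}$ and (U2) applies directly. When $J-j$ is large one is forced into the smooth regime $p_j < x^{1/s}$ with $s$ large, and the paper controls the resulting sums $\sum_{m\ge x^{c/4},\, q_m\le x^{1/s}} \Phi_r(m)/m$ via Wolke--Shiu type estimates (Lemma~3.1 and Propositions~3.3--3.4), which produce the saving factor $\exp(-cs\log s/16)$ needed to beat the loss $2^{rs\beta}$. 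None of this machinery is captured by a monotonicity-in-$\delta$ argument; it is a genuinely different and necessary idea.
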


The strategy of the proof is based on combinatorial arguments from \cite{Er52}, together with 
tools built with the multivariable Tauberian theorem due to Zavala \cite{Za22}  and Wolke's work \cite{Wo71}. 

%\subsection{Auxiliary result II} {\color{red}[The revision hasn't been finished yet.]}
As well, we need an effective version of the Chebotarev-Sato-Tate conjecture. This was proved conditionally by Wong \cite{Wo19} in a quite general setting. Theorem~\ref{prop_P(x,I;f;delta)} below is a specialization, though being a quite easy consequence of Wong's work, whose proof is explained in \S~\ref{sec_effective CST} for completeness. 
When $I=[0,\pi]$, we will use the effective Chebotarev density theorem proved by Lagarias and Odlyzko \cite{LO77}, for which only GRH is required.

\begin{theorem}\label{prop_P(x,I;f;delta)}
Let $f$ be a non-CM newform of integral weight of level $N$. Suppose the Fourier coefficients of $f$ are integers. For an interval $I \subseteq [0,\pi]$ and any positive integer $\delta$, under GRH (when $I=[0,\pi]$), or GRH and Hypothesis A (when $I\subset [0,\pi]$), we have
    \begin{align*}
    & \#\{p\le x: 0\neq a_f(p)\equiv 0 \ ({\rm mod} \ \delta), \ \theta_f(p)\in I\}  \\
      &  =h_f(\delta)\mu_{ST}(I)\pi(x)+O\left(x^{3/4}h_f(\delta)\delta^6\log^{1/2}(\delta N)\right)+O\left(x^{3/4}h_f(\delta)\delta^4 \log^{1/2}x\right),
    \end{align*}
    where $\lambda_f(p)$ and $\theta_f(p)$ are defined as in \eqref{fc} and the constant $h_f(\delta)$ is a certain constant depending on $f$ and $\delta$ that satisfies
    $$
    h_f(\delta) \asymp_f \prod_{\substack{\ell^m\| \delta\\  \ell \gg_f 1}} \ell^{2-m}(\ell^2-1)^{-1}.
    $$
\end{theorem}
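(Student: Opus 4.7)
The plan is to translate the divisibility condition $\delta \mid a_f(p)$ into a Frobenius condition on the mod-$\delta$ Galois representation attached to $f$, apply Wong's effective Chebotarev--Sato--Tate theorem (or the Lagarias--Odlyzko effective Chebotarev when $I=[0,\pi]$) to the resulting field $K_\delta$, and identify the leading density using Serre's open image theorem for non-CM newforms.

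\textbf{Galois setup.} Let $\bar\rho_{f,\delta}\colon G_{\mathbb Q}\to GL_2(\mathbb Z/\delta\mathbb Z)$ be the mod-$\delta$ representation assembled via CRT from the $\ell$-adic representations $\rho_{f,\ell^m}$ across $\ell^m\|\delta$, with fixed field $K_\delta$ and image $G_\delta\subseteq GL_2(\mathbb Z/\delta\mathbb Z)$. For every $p\nmid N\delta$, $\mathrm{tr}(\bar\rho_{f,\delta}(\mathrm{Frob}_p))\equiv a_f(p)\pmod\delta$, so counting primes $p\le x$ with $0\ne a_f(p)\equiv 0\pmod\delta$ reduces, up to an $O(\log N\delta)$ contribution from the ramified primes, to counting primes whose Frobenius falls in the conjugation-stable subset $C_\delta\subset G_\delta$ of trace-zero elements.

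\textbf{Effective counting and density.} Applying Wong's effective Chebotarev--Sato--Tate theorem to the pair $(f,K_\delta/\mathbb Q)$ under GRH and Hypothesis~A (which supplies analytic continuation and functional equation for $L(s,\mathrm{Sym}^n f\otimes\chi)$ over Artin characters $\chi$ of $G_\delta$) yields
\begin{align*}
    \#\{p\le x:\mathrm{Frob}_p\in C_\delta,\ \theta_f(p)\in I\}=\frac{|C_\delta|}{|G_\delta|}\mu_{ST}(I)\pi(x)+E(x,\delta),
\end{align*}
with $E(x,\delta)$ controlled by $x^{3/4}$ times polynomial factors in $[K_\delta:\mathbb Q]$ and $\log|d_{K_\delta}|$. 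When $I=[0,\pi]$, the Sato--Tate factor trivializes and the Lagarias--Odlyzko effective Chebotarev under GRH alone suffices, so Hypothesis~A is not required. Setting $h_f(\delta):=|C_\delta|/|G_\delta|$, Serre's open image theorem supplies a constant $c_f$ such that for $\ell>c_f$ the mod-$\ell^m$ image is the full preimage in $GL_2(\mathbb Z/\ell^m)$ of the cyclotomic image of the determinant; a direct enumeration of trace-zero matrices in that group gives the local density $\asymp \ell^{2-m}(\ell^2-1)^{-1}$. Multiplicativity across $\ell^m\|\delta$ combined with absorbing the finitely many small-$\ell$ factors into an $f$-dependent constant then recovers the stated formula for $h_f(\delta)$.

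\textbf{Error and main obstacle.} Bounding $[K_\delta:\mathbb Q]\le|GL_2(\mathbb Z/\delta)|\ll\delta^4$ and using the N\'eron--Ogg--Shafarevich criterion to restrict ramification of $K_\delta/\mathbb Q$ to primes dividing $N\delta$ yields a polynomial bound on $\log|d_{K_\delta}|$; substituting these into Wong's estimate reproduces the two error terms $x^{3/4}h_f(\delta)\delta^6\log^{1/2}(\delta N)$ and $x^{3/4}h_f(\delta)\delta^4\log^{1/2}x$ exactly as stated, with the $h_f(\delta)$ factor coming from the ratio $|C_\delta|/|G_\delta|$ in Wong's bound. The one genuinely nontrivial step is the density computation: the main obstacle is checking that the deviation of the mod-$\delta$ image from the generic arithmetic subgroup at the finitely many $f$-bad primes contributes only an $f$-dependent multiplicative defect, which can then be cleanly absorbed into the implied constant of $\asymp_f$; the remainder of the argument is essentially a mechanical translation of Wong's theorem into the explicit form displayed in the statement.
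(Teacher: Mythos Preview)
Your approach is essentially the paper's: pass to the mod-$\delta$ Galois representation, let $C_\delta\subset G_\delta$ be the trace-zero locus, apply Wong's effective Chebotarev--Sato--Tate (or Lagarias--Odlyzko when $I=[0,\pi]$), and read off $h_f(\delta)$ via the open-image results of Serre, Ribet, et al.

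Two points where your write-up is slightly off. First, the reduction from ``$0\ne a_f(p)\equiv 0\pmod\delta$'' to ``$\mathrm{Frob}_p\in C_\delta$'' is not just an $O(\log N\delta)$ correction for ramified primes: you must also discard the primes with $a_f(p)=0$, which are infinitely many in general. The paper handles this with Serre's GRH bound $\#\{p\le x:a_f(p)=0\}\ll x^{3/4}$, and then observes that $h_f(\delta)\ge \delta^{-1}$ so this $O(x^{3/4})$ is absorbed into the stated error. Second, Wong's theorem is stated for a single conjugacy class with error $x^{3/4}|C|^{1/2}(\log(x\mathcal M N))^{1/2}$, so the paper decomposes $C_\delta=C_1\sqcup\cdots\sqcup C_s$ and bounds $\sum_i|C_i|^{1/2}\le |C_\delta|=h_f(\delta)|G_\delta|\le h_f(\delta)\delta^4$; this, together with $\log\mathcal M(K_\delta/\mathbb Q)\ll\delta^4\log(\delta N)$, is precisely what produces the two displayed error terms. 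Your claim that the $h_f(\delta)$ in the error ``comes from the ratio $|C_\delta|/|G_\delta|$ in Wong's bound'' is not quite the mechanism---it comes from bounding $|C_\delta|$ itself.
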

\begin{remark} Hypothesis A is predicted to be valid in the context of Langlands program. Precisely, it will follow from the automophy of $L(s, {\rm Sym}^mf \times \chi)$, see \S~\ref{sec_L ftns on sym otimes chi}. 
\end{remark}
The outline of the paper is as follows. In $\mathsection$~\ref{sec_multivariable Tauberian}, we introduce the multivariable Tauberian theorem due to Zavala \cite{Za22} and provide its direct applications (Proposition~\ref{prop_1/phi(lcm) formula}, \ref{prop_lcm/phi(lcm)^2 formula}). Some results in multiplicative number theory are necessary, which are collected in \S~\ref{pre}. Then we apply the results in these two sections to prove Theorem~\ref{thm_main theorem general} in $\mathsection$~\ref{sec_proof of main thm in general}. Next, we provide a review on $L$-functions and discuss {\it Hypothesis}~\ref{hypo_nice analytic properties of L-function} in $\mathsection$~\ref{sec_preliminaries on L-functions}. The work on effective Chebotarev-Sato-Tate conjectures, leading to Theorem~\ref{prop_P(x,I;f;delta)}, is explored in  $\mathsection$~\ref{sec_effective CST}. Finally we prove Theorem~\ref{thm_main theorem for modular forms} in $\mathsection$~\ref{sec_proof of main thm for modular forms}. Appendix~\ref{sec_appendix} describes the adelic representation attached to modular forms.

\section{Multivariable Tauberian theorem}\label{sec_multivariable Tauberian}

This section is devoted to introducing the multivariable Tauberian theorem and its applications, which will be used to prove Theorem~\ref{thm_main theorem general}. We begin by introducing some notations. Let $\lcm(m_1,\ldots,m_r)$ denote the least common multiple of $m_1,\ldots,m_r$. For an $r$-tuple $x=(x_1,\ldots,x_r) \in \mathbb R^r$, we define $|x|_1:=|x_1|+\cdots+|x_r|$. We also define $\langle x,y \rangle:=\sum_{i=1}^r x_iy_i$ for $x=(x_1,\ldots,x_r) \in \mathbb R^r$ and $y=(y_1,\ldots,y_r) \in \mathbb R^r$. A multivariable function $f \colon \mathbb N^r \to \mathbb C$ is said to be multiplicative if for all $m=(m_1,\ldots,m_r) \in \mathbb N^r$ and $m'=(m_1',\ldots,m_r') \in \mathbb N^r$ satisfying $$\mathrm{gcd}(\lcm(m_1,\ldots,m_r), \lcm(m_1',\ldots,m_r'))=1,$$ 
we have $f(m_1m_1',\ldots,m_r m_r')=f(m)f(m')$. 

Let $(g,\kappa, c, \eta)$ denote a quadraple consisting of the following data:
\begin{itemize}
    \item A function $g \colon \mathbb N_0^r \to \mathbb C$ that has subexponential growth, i.e., for any $\epsilon>0$, we have $g(\nu) \ll_{\epsilon} e^{\epsilon|\nu|_1}$ uniformly in $\nu \in \mathbb N_0^r$.
    \item A function $\kappa \colon \mathbb N_0^r \to [1,\infty) \cup \{0\}$ that satisfies $\kappa(0)=0$ and $\mathrm{inf}_{\nu \in \mathbb N_0^r \setminus \{0\}} \frac{\kappa(\nu)}{|\nu|_1}>0$.
    \item An $r$-tuple $c=(c_1,\ldots,c_r) \in [0,\infty)^r$.
    \item A real number $\eta \in  (0,\infty)$.
\end{itemize}
We define
\begin{align*}
    \mathcal I&:=\{\nu \in \mathbb N_0^r: \kappa(\nu)=1 \text{ and } g(\nu) \neq 0\}, \\
    \mathcal J&:=\{{\bm e_i}: c_i=0\}, 
\end{align*}
where $({\bm e_1},\ldots,{\bm e_r})$ is the canonical basis for $\mathbb R^r$. For such $\mathcal I$ and $\mathcal J$, we let
\begin{align*}
    \rho:=\sum_{\nu \in \mathcal I}g(\nu)+\# \mathcal J-\mathrm{rk}(\mathcal I \cup \mathcal J),
\end{align*}
where $\mathrm{rk}(\mathcal I \cup \mathcal J)$ is the rank of the subspace of $\mathbb R^r$ generated by $\mathcal I \cup \mathcal J$. 

\begin{theorem}{\cite[Theorem 16]{Za22}}\label{thm_multivariable Tauberian}
    Let $f \colon \mathbb N^n \to \mathbb C$ be a multiplicative function that satisfies the condition as follows; for any $\epsilon>0$, 
    \begin{align*}
        f(p^{\nu_1},\ldots,p^{\nu_r})-g(\nu)p^{\langle c,\nu \rangle-\kappa(\nu)} \ll_{\epsilon} e^{\epsilon |\nu|_1}p^{\langle c,\nu \rangle-\kappa(\nu)-\eta},
    \end{align*}
    uniformly in $\nu \in \mathbb N_0^n$ and $p$ a prime number. Assume $\mathcal I$ is non-empty. Then there exists a polynomial $Q_{\infty}$ of degree at most $\rho$ and a positive constant $\mu_{\infty}>0$ such that
    \begin{align*}
        \sum_{d_1,\ldots,d_r \leq x}f(d_1,\ldots,d_r)=x^{|c|_1}Q_{\infty}(\log x)+O(x^{|c|_1-\mu_{\infty}}) \quad \text{as } x \rightarrow \infty. 
    \end{align*}
    Furthermore, if we assume that
    \begin{enumerate}[\normalfont(i)]
        \item $\mathrm{rk}(\mathcal I \cup \mathcal J)=r$,
        \item the point $(1,\ldots,1) \in \mathbb R^r$ belongs to the interior of the cone generated by $\mathcal I \cup \mathcal J$, i.e.,
        \begin{align*}
            (1,\ldots,1) \in \bigg\{\sum_{\nu \in \mathcal I \cup \mathcal J}\lambda_{\nu}\nu : \lambda_{\nu} \in (0,\infty) \text{ for any } \nu \in \mathcal I \cup \mathcal J\bigg\},
        \end{align*}
    \end{enumerate}
    then there exists $C>0$ such that
    \begin{align*}
        \sum_{d_1,\ldots,d_r \leq x}f(d_1,\ldots,d_r)=Cx^{|c|_1}(\log x)^{\rho}+O((\log x)^{\rho-1}), \quad \text{as } x \rightarrow \infty.
    \end{align*}
\end{theorem}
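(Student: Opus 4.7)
The plan is to apply the analytic/Dirichlet series method. Since $f$ is multiplicative on $\mathbb N^r$, I would first form the $r$-variable Dirichlet series
$$
F(s_1,\ldots,s_r):=\sum_{d\in\mathbb N^r}\frac{f(d_1,\ldots,d_r)}{d_1^{s_1}\cdots d_r^{s_r}}=\prod_p\sum_{\nu\in\mathbb N_0^r}\frac{f(p^{\nu_1},\ldots,p^{\nu_r})}{p^{\langle s,\nu\rangle}},
$$
and exploit the hypothesis $f(p^\nu)=g(\nu)p^{\langle c,\nu\rangle-\kappa(\nu)}+O_\epsilon(e^{\epsilon|\nu|_1}p^{\langle c,\nu\rangle-\kappa(\nu)-\eta})$ to separate the leading local behavior. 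In the region $\mathrm{Re}(s_i)>c_i$, each $\nu\in\mathcal I$ (where $\kappa(\nu)=1$) contributes a factor morally behaving like $\zeta(1+\langle s-c,\nu\rangle)^{g(\nu)}$, while all terms with $\kappa(\nu)\ge 2$, together with the $O$-error, combine to a product that converges absolutely (hence holomorphically and with polynomial growth) in a slightly larger strip $\mathrm{Re}(s_i)>c_i-\delta_0$ for some $\delta_0>0$. The target is therefore a factorization
$$
F(s)=\Bigl(\prod_{\nu\in\mathcal I}\zeta(1+\langle s-c,\nu\rangle)^{g(\nu)}\Bigr)\,H(s),
$$
with $H$ holomorphic and controlled past $s=c$; the subexponentiality of $g$ and the $\mathrm{inf}_{\nu}\kappa(\nu)/|\nu|_1>0$ condition are exactly what make this factorization work uniformly over $p$.

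Next, I would recover $\sum_{d_i\le x}f(d)$ from $F$ via a multidimensional Perron-type formula, integrating $F(s)\prod_i x^{s_i}/s_i$ on vertical contours just to the right of $s_i=c_i$ and then shifting the contours leftward past the singular locus. Collecting residues produces the main term $x^{|c|_1}Q_\infty(\log x)$, and the error $O(x^{|c|_1-\mu_\infty})$ follows from the polynomial growth of $H$ together with standard convexity / zero-free-region bounds for $\zeta$. The degree bound $\deg Q_\infty \le \rho$ records the multiplicity at $s=c$: $\sum_{\nu\in\mathcal I}g(\nu)$ from the $\zeta$-factors, $\#\mathcal J$ from the $1/s_i$ poles at coordinates with $c_i=0$, minus $\mathrm{rk}(\mathcal I\cup\mathcal J)$ reflecting that the intersection of the corresponding pole hyperplanes is a linear subspace of that codimension.

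The hard part will be extracting the sharper asymptotic $Cx^{|c|_1}(\log x)^\rho$ under (i) and (ii). Condition (i), $\mathrm{rk}(\mathcal I\cup\mathcal J)=r$, forces the pole hyperplanes and coordinate hyperplanes together to cut out the isolated point $s=c$, so an iterated residue there actually has meaning. Condition (ii), that $(1,\ldots,1)$ lies in the \emph{interior} of the cone generated by $\mathcal I\cup\mathcal J$, is exactly the positivity input that ensures, in the steepest-descent / iterated-residue analysis along the diagonal direction, that no cancellation forces the leading coefficient to vanish. This is where a naive univariate Tauberian heuristic fails: the leading constant $C$ is a residue of a product of several $\zeta$-factors evaluated along a ray through the cone, and proving $C\neq 0$ plus producing the correct power of $\log x$ is a genuinely multivariable obstruction. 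I would therefore not attempt this ad hoc but instead appeal to the general multivariable Tauberian machinery developed in \cite{Za22}, which packages exactly this cone-geometry nondegeneracy.
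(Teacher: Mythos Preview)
The paper does not prove this statement at all: it is quoted verbatim as \cite[Theorem 16]{Za22} and used as a black box to derive Propositions~\ref{prop_1/phi(lcm) formula} and~\ref{prop_lcm/phi(lcm)^2 formula}. Your sketch of the underlying mechanism (Euler product factorization of the multivariable Dirichlet series, extraction of $\zeta$-factors indexed by $\mathcal I$, Perron/contour shifting, and the cone condition (ii) guaranteeing nonvanishing of the leading coefficient) is a reasonable outline of what Zavala's proof presumably does, and you yourself end by deferring to \cite{Za22} for the actual execution --- which is exactly what the paper does.
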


\begin{proposition}\label{prop_1/phi(lcm) formula}
    There exists a constant $C_r>0$ such that
    \begin{align*}
        \sum_{d_1, \ldots, d_r \leq x}\frac{1}{\phi(\lcm(d_1,\ldots,d_r))}=C_r(\log x)^{2^r-1}+O((\log x)^{2^r-2}).
    \end{align*}
\end{proposition}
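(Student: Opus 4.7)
The plan is to apply Theorem~\ref{thm_multivariable Tauberian} to the function $f(d_1,\ldots,d_r):=1/\phi(\lcm(d_1,\ldots,d_r))$ with a well-chosen quadruple $(g,\kappa,c,\eta)$. First I would verify multiplicativity of $f$: if $\gcd(\lcm(m_1,\ldots,m_r),\lcm(m_1',\ldots,m_r'))=1$, then $\lcm(m_1m_1',\ldots,m_rm_r')=\lcm(m_1,\ldots,m_r)\cdot\lcm(m_1',\ldots,m_r')$ with the two factors coprime, and multiplicativity of $\phi$ on coprime arguments delivers the required factorization.

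Next I would evaluate $f$ at prime-power tuples. Setting $M=\max(\nu_1,\ldots,\nu_r)$, we have $\lcm(p^{\nu_1},\ldots,p^{\nu_r})=p^M$, so for $\nu\neq 0$
\[
f(p^{\nu_1},\ldots,p^{\nu_r})=\frac{1}{\phi(p^M)}=\frac{p^{-M}}{1-p^{-1}}=p^{-M}+p^{-M-1}+p^{-M-2}+\cdots,
\]
while $f=1$ for $\nu=0$. This suggests the choice $c=(0,\ldots,0)$, $g\equiv 1$, $\kappa(0)=0$ and $\kappa(\nu)=\max(\nu_1,\ldots,\nu_r)$ for $\nu\neq 0$, and $\eta=1$; the approximation hypothesis of Theorem~\ref{thm_multivariable Tauberian} is then verified by the crude bound
\[
\bigl|f(p^{\nu_1},\ldots,p^{\nu_r})-p^{-\kappa(\nu)}\bigr|\le 2\,p^{-\kappa(\nu)-1},
\]
uniform in $\nu$ and $p$, which comes directly from the geometric expansion of $(1-p^{-1})^{-1}$.

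With these data I would compute the combinatorial invariants. The set $\mathcal I=\{\nu\in\mathbb N_0^r:\kappa(\nu)=1,\;g(\nu)\neq 0\}$ equals $\{0,1\}^r\setminus\{0\}$, which has $2^r-1$ elements and contains the standard basis $\mathbf e_1,\ldots,\mathbf e_r$. Since $c_i=0$ for each $i$, $\mathcal J=\{\mathbf e_1,\ldots,\mathbf e_r\}\subseteq \mathcal I$, so $\mathcal I\cup\mathcal J=\mathcal I$ has rank $r$, and $(1,\ldots,1)=\mathbf e_1+\cdots+\mathbf e_r$ manifestly lies in the interior of the cone it generates. Conditions (i) and (ii) of the theorem therefore hold, and
\[
\rho=\sum_{\nu\in\mathcal I}g(\nu)+\#\mathcal J-\mathrm{rk}(\mathcal I\cup\mathcal J)=(2^r-1)+r-r=2^r-1.
\]
Applying Theorem~\ref{thm_multivariable Tauberian} with $|c|_1=0$ yields the asymptotic formula with leading term $C_r(\log x)^{2^r-1}$ and error $O((\log x)^{2^r-2})$, as claimed.

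No genuine obstacle arises; the argument is essentially a careful identification of the parameters. The only point requiring attention is to notice that $\mathcal J\subseteq\mathcal I$, so that $\mathcal J$ contributes nothing extra to $\rho$ beyond what is already captured by $\sum_{\nu\in\mathcal I}g(\nu)$, and that $\mathcal I$ has precisely $2^r-1$ elements rather than, say, growing with further $\kappa=1$ contributions.
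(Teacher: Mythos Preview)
Your proposal is correct and follows essentially the same route as the paper: the same quadruple $(g,\kappa,c,\eta)=(1,\max\nu_i,0,1)$, the same identification $\mathcal I=\{0,1\}^r\setminus\{0\}$, the same bound $|f(p^{\nu})-p^{-\kappa(\nu)}|\le 2p^{-\kappa(\nu)-1}$, and the same computation $\rho=2^r-1$. You even supply the multiplicativity verification that the paper leaves implicit.
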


\begin{proof}
    We set $g=1, \kappa(\nu)=\max  \nu_i, c=(0,\ldots,0), \eta=1$, and
    \begin{align*}
        f(d_1,\ldots,d_r)=\frac{1}{\phi(\lcm(d_1,\ldots,d_r))}
    \end{align*} 
    so that $f$ is a multiplicative function. We have
    \begin{align*}
        \mathcal I&=\{(\delta_1,\ldots,\delta_r): \delta_i=0 \text{ or } 1\} \setminus \{(0,\ldots,0)\}, \\
        \mathcal J&=\{{\bm e_1}, \ldots, {\bm e_r}\},
    \end{align*}
    so that
    \begin{align*}
        \mathrm{rk}(\mathcal I \cup \mathcal J)=r, \quad |\mathcal J|=r, \quad \sum_{\nu \in \mathcal I}g(\nu)=2^r-1,
    \end{align*}
    and consequently, $\rho=2^r-1$. Since $\mathcal I \cup \mathcal J$ generates the whole space $\mathbb R^r$, the condition (ii) in Theorem~\ref{thm_multivariable Tauberian} obviously holds. Thus, it is enough to show that
    \begin{align}\label{eqn_quadraple data class condition}
        f(p^{\nu_1},\ldots,p^{\nu_r})-g(\nu)p^{\langle c,\nu \rangle-\kappa(\nu)} \ll_{\epsilon} {\rm e}^{\epsilon |\nu|_1}p^{\langle c,\nu \rangle-\kappa(\nu)-\eta}
    \end{align}
    uniformly in $\nu \in \mathbb N_0^n$ and $p$ a prime number. Indeed, we have
    \begin{align*}
        f(p^{\nu_1},\ldots,p^{\nu_r})=p^{-\max \nu_i}\left(1-\frac{1}{p}\right)^{-1}.
    \end{align*}
    and hence the left-hand side of \eqref{eqn_quadraple data class condition} satisfies
    \begin{align*}
        f(p^{\nu_1},\ldots,p^{\nu_r})-g(\nu)p^{\langle c,\nu \rangle-\kappa(\nu)}=p^{-\max \nu_i}\left(\frac{1}{p}+\frac{1}{p^2}+\cdots\right) \leq 2p^{-\max \nu_i-1} \leq 2e^{\epsilon |\nu|_1}p^{\langle c,\nu \rangle-\kappa(\nu)-\eta}.
    \end{align*}
    We complete the proof by applying Theorem~\ref{thm_multivariable Tauberian}.
\end{proof}

\begin{proposition}\label{prop_lcm/phi(lcm)^2 formula}
    There exists a constant $D_r>0$ such that
    \begin{align*}
        \sum_{d_1, \ldots, d_r \leq x}\frac{\lcm(d_1,\ldots,d_r)}{\phi(\lcm(d_1,\ldots,d_r))^2}=D_r(\log x)^{2^r-1}+O((\log x)^{2^r-2}).
    \end{align*}
\end{proposition}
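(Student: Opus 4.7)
The plan is to apply Theorem~\ref{thm_multivariable Tauberian} with exactly the same quadruple of data $(g,\kappa,c,\eta) = (1,\,\nu\mapsto \max_i \nu_i,\,(0,\ldots,0),\,1)$ used in the proof of Proposition~\ref{prop_1/phi(lcm) formula}, but now for the multiplicative function
$$f(d_1,\ldots,d_r) \;=\; \frac{\lcm(d_1,\ldots,d_r)}{\phi(\lcm(d_1,\ldots,d_r))^2}.$$
Since the combinatorial data $\mathcal{I}$, $\mathcal{J}$, the rank $\mathrm{rk}(\mathcal{I}\cup\mathcal{J}) = r$, and the exponent $\rho = 2^r - 1$ depend only on the triple $(g,\kappa,c)$, all of the set-theoretic verifications from the previous proposition — in particular conditions (i) and (ii) of Theorem~\ref{thm_multivariable Tauberian} — carry over without change. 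So the whole proof reduces to re-verifying the local approximation condition
$$f(p^{\nu_1},\ldots,p^{\nu_r}) - g(\nu)\,p^{\langle c,\nu\rangle - \kappa(\nu)} \;\ll_{\epsilon}\; e^{\epsilon|\nu|_1}\,p^{\langle c,\nu\rangle - \kappa(\nu) - \eta}.$$

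Writing $m = \max_i \nu_i$ and using $\phi(p^m) = p^{m-1}(p-1)$ for $m \geq 1$, I would compute directly
$$f(p^{\nu_1},\ldots,p^{\nu_r}) \;=\; \frac{p^m}{(p^{m-1}(p-1))^2} \;=\; p^{-m}\left(1-\frac{1}{p}\right)^{-2},$$
and expanding $(1-1/p)^{-2} = 1 + 2/p + 3/p^2 + \cdots$ gives
$$f(p^{\nu_1},\ldots,p^{\nu_r}) - p^{-m} \;=\; p^{-m}\!\left(\frac{2}{p} + \frac{3}{p^2} + \cdots\right) \;\ll\; p^{-m-1}$$
uniformly in $p\ge 2$ (the bracketed series equals $(2p-1)/(p-1)^2$, bounded by an absolute multiple of $1/p$). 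The case $m = 0$ is trivial since both sides vanish. This is precisely the bound required with $g=1$, $c=0$, $\kappa(\nu)=m$, and $\eta=1$, so the hypothesis of Theorem~\ref{thm_multivariable Tauberian} is met, and that theorem delivers the asserted asymptotic with leading term of order $(\log x)^{2^r-1}$.

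There is no serious obstacle in this proof; it is a direct analog of Proposition~\ref{prop_1/phi(lcm) formula}, the only substantive difference being the local computation at prime powers exhibited above. If anything is delicate, it is confirming that the implicit constant in the geometric remainder is absolute uniformly in $p$, but the explicit formula $(2p-1)/(p-1)^2$ makes this immediate, so the whole argument is essentially bookkeeping.
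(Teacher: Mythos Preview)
Your proof is correct and follows essentially the same approach as the paper: both reuse the quadruple $(g,\kappa,c,\eta)$ from Proposition~\ref{prop_1/phi(lcm) formula}, observe that the combinatorial verifications carry over unchanged, and reduce to checking the local bound $f(p^{\nu_1},\ldots,p^{\nu_r})-p^{-\max_i\nu_i}\ll p^{-\max_i\nu_i-1}$ via the identity $f(p^{\nu_1},\ldots,p^{\nu_r})=p^{-\max_i\nu_i}(1-1/p)^{-2}$. Your write-up is in fact slightly more careful, noting the trivial case $m=0$ and making the uniformity in $p$ explicit.
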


\begin{proof}
    We use the same quadraple $(g,\kappa,c,\eta)$ as in the proof of Proposition~\ref{prop_1/phi(lcm) formula}, and let
    \begin{align*}
        f(d_1,\ldots,d_r)=\frac{\lcm(d_1,\ldots,d_r)}{\phi(\lcm(d_1,\ldots,d_r))^2}.
    \end{align*}
    All the conditions required to apply Theorem~\ref{thm_multivariable Tauberian} are given in the proof of Proposition~\ref{prop_1/phi(lcm) formula} except for \eqref{eqn_quadraple data class condition}. For this remaining condition, note that
    \begin{align*}
        \frac{\lcm(p^{\nu_1},\ldots,p^{\nu_r})}{\phi(\lcm(p^{\nu_1},\ldots,p^{\nu_r}))^2}=p^{-\max \nu_i}\left(1-\frac{1}{p}\right)^{-2}.
    \end{align*}
    Thus, we have
    \begin{align*}
        f(p^{\nu_1},\ldots,p^{\nu_1})-g(\nu)p^{\langle c,\nu \rangle-\kappa(\nu)}=p^{-\max \nu_i}\left(1-\left(1-\frac{1}{p}\right)^{-2}\right) \ll p^{-\max \nu_i-1} \leq {\rm e}^{\epsilon|\nu|_1}p^{-\max \nu_i-1},
    \end{align*}
    which completes the proof.
\end{proof}

The same quadraple $(g,\kappa,c,\eta)$ as in Proposition~\ref{prop_1/phi(lcm) formula} or \ref{prop_lcm/phi(lcm)^2 formula} gives a similar estimation for the sum of $\frac{1}{\lcm(d_1,\ldots,d_r)}$ (see \cite[Corollary 5]{Za22}). Specifically, we have
\begin{align}\label{eqn_1/lcm formula}
    \sum_{d_1, \ldots, d_r \leq x}\frac{1}{\lcm(d_1,\ldots,d_r)}=B_r(\log x)^{2^r-1}+O((\log x)^{2^r-2})
\end{align}
for some $B_r>0$.

\section{Preliminaries for Theorem~\ref{thm_main theorem general}}\label{pre}

For a positive integer $m$, we denote the largest prime factor of $m$ by $q_m$.  

\begin{lemma}\label{lem_Wolke lemma}
    Let $c \in (0,1)$ and $F \colon \mathbb N \to \mathbb R$ be a non-negative multiplicative function such that there exists a positive constant $\alpha_1$ satisfying
    \begin{align*}
        F(p^e) &\leq \alpha_1^e, \\
        F(m) &\ll_{\epsilon} m^{\epsilon}
    \end{align*}
     for any prime $p$, any positive integers $e$ and $m$, and any positive real number $\epsilon$. If $1\leq s \leq \frac{\log x}{\log\log x}$, then
    \begin{align*}
        \sum_{\substack{m \geq x^{c/4} \\ q_m \leq x^{1/s}}} \frac{F(m)}{m} \ll \exp \bigg(\sum_{p \leq x^{1/s}} \frac{F(p)}{p}-\frac{cs\log s}{16}\bigg),
    \end{align*}
    where the implied constant depends at most on $c$ and $\alpha_1$. 
\end{lemma}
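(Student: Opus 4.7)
The plan is to apply Rankin's trick with a shift parameter $\sigma > 0$ chosen to optimise the trade-off between a saving in $x$ and an amplification in the resulting Euler product. Setting $y := x^{1/s}$ and using $(m/x^{c/4})^{\sigma} \geq 1$ for $m \geq x^{c/4}$, I would start from
$$\sum_{\substack{m \geq x^{c/4} \\ q_m \leq y}} \frac{F(m)}{m} \;\leq\; x^{-c\sigma/4} \sum_{q_m \leq y} \frac{F(m)}{m^{1-\sigma}} \;=\; x^{-c\sigma/4} \prod_{p \leq y} E_p(\sigma),$$
where $E_p(\sigma) := \sum_{e \geq 0} F(p^e) p^{-e(1-\sigma)}$ and the last equality uses the multiplicativity of $F$. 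The hypothesis $F(p^{e}) \leq \alpha_1^{e}$ dominates each factor by a geometric series, convergent once $p^{1-\sigma} > \alpha_1$, in which regime $E_p(\sigma) \leq 1 + F(p) p^{\sigma-1} + O_{\alpha_1}(p^{2(\sigma-1)})$; the contribution of the finitely many primes failing this condition is absorbed into the implied constant.

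Taking logarithms, using $p^{\sigma} - 1 \leq \sigma (\log p)\, e^{\sigma \log p} \leq \sigma (\log p)\, e^{t}$ with $t := \sigma \log y$, and invoking Mertens' estimate $\sum_{p \leq y} (\log p)/p \ll \log y$, one obtains, after noting $\sigma \log x = st$,
$$\log\!\left(x^{-c\sigma/4} \prod_{p \leq y} E_p(\sigma)\right) \;\leq\; \sum_{p \leq y} \frac{F(p)}{p} \;-\; \frac{cst}{4} \;+\; C_{\alpha_1}\, t e^{t} \;+\; O_{\alpha_1}(1).$$

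The optimisation step is to pick $t$ so that $cst/4 - C_{\alpha_1} t e^{t} \geq cs(\log s)/16$. For $s \geq s_0(c, \alpha_1)$, the choice $t = \tfrac{1}{2}\log s$ works: then $cst/4 = cs(\log s)/8$ and $t e^{t} = \tfrac{1}{2} \sqrt{s} \log s$, so the desired inequality holds once $\sqrt{s} \geq 16 C_{\alpha_1}/c$. The constraint $s \leq (\log x)/\log\log x$ guarantees $t \leq \tfrac{1}{2} \log\log x \leq \tfrac{1}{2} \log y$, keeping $\sigma$ safely in the range where the Euler product manipulations are justified. For the remaining bounded range $1 \leq s < s_0$, the quantity $cs(\log s)/16$ is $O_{c, \alpha_1}(1)$, and the trivial choice $t = 1$ (i.e. $\sigma = 1/\log y$) produces the bound after absorbing constants into the implied constant.

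The main technical obstacle is this balancing: the amplification $C_{\alpha_1} t e^{t}$ is exponential in $t$, so the admissible Rankin saving can only be logarithmic in $s$ — which is precisely why the factor $cs(\log s)/16$, rather than something linear in $s$, appears on the right-hand side of the lemma. A secondary subtlety is the treatment of primes for which $p^{1-\sigma} \leq \alpha_1$; these are finite in number, depending only on $\alpha_1$, and contribute harmlessly to the $O_{\alpha_1}(1)$ term.
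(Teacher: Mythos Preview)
Your proposal is correct and follows essentially the same route as the paper: Rankin's trick with a shift $\sigma\asymp \frac{s\log s}{\log x}$, then balancing the saving $x^{-c\sigma/4}$ against the amplification in the Euler product. The paper simply quotes the intermediate inequality from Shiu (with amplification term $2\alpha_1 Y^{1-\delta}$) and plugs in $\delta=1-\tfrac{s\log s}{4\log x}$, whereas you re-derive the Euler-product estimate by hand and choose $t=\tfrac12\log s$; the constants differ but the mechanism is identical.
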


\begin{proof}
    The assertion has been essentially proved in \cite[Lemma~3]{Wo71} or \cite[Lemma~4]{Sh80}. In the proof of \cite[Lemma~4]{Sh80}, it is proved that
    \begin{align*}
        \sum_{\substack{m \geq X \\ q_m \leq Y}}\frac{F(m)}{m} \ll \exp\bigg((\delta-1)\log X+\sum_{p \leq Y}\frac{F(p)}{p}+2\alpha_1 Y^{1-\delta}\bigg)
    \end{align*}
    for any $3/4 \leq \delta \leq 1$. Taking $X=x^{c/4},Y=x^{1/s}$, and $\delta=1-\frac{s\log s}{4\log x}$, which satisfies $3/4\leq\delta\leq 1$ since $1\leq s\leq \frac{\log x}{\log\log x}$, we obtain
    \begin{align*}
        \sum_{\substack{m \geq x^{c/4} \\ q_m \leq x^{1/s}}} \frac{F(m)}{m} &\ll \exp \left(-\frac{cs\log s}{8}+\sum_{p \leq x^{1/s}} \frac{F(p)}{p}+2\alpha_1 s^{1/4}\right) \\
        &\ll \exp \left(\sum_{p \leq x^{1/s}} \frac{F(p)}{p}-\frac{cs\log s}{16}\right).
    \end{align*}
    The last inequality of the above follows from $cs\log s + c^{-1}\gg 2\alpha_1 s^{1/4}$.
\end{proof}

Let $S(X,Y)$ be the set of all positive integers $n\le X$ such that $n$ is {\it $Y$-smooth}, i.e., free of any prime divisors larger than $Y$. 
\begin{lemma}\label{lem_Chiriac's lemma}
    Let $r\in \mathbb N$ and $c \in (0,1)$. If $x^{1/s} \leq (\log x)^2$, then
    \begin{align*}
        \sum_{\substack{m \in S(x^{rc},x^{1/s}) \\ m \geq x^{c/4}}}\frac{1}{m} \ll x^{-\delta}
    \end{align*}
    for some $\delta>0$, where the implied constant and $\delta$ are independent of $s$ and $x$.
\end{lemma}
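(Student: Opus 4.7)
The plan is to reduce the claim to a direct application of Lemma~\ref{lem_Wolke lemma}, taken with the constant multiplicative function $F \equiv 1$ (so one may take $\alpha_1 = 1$). Upon discarding the upper bound $m \leq x^{rc}$ coming from $m \in S(x^{rc},x^{1/s})$, which can only shrink the sum, it suffices to bound
\[
\sum_{\substack{m \geq x^{c/4} \\ q_m \leq x^{1/s}}} \frac{1}{m}.
\]
Before invoking Wolke's lemma, I would deal with the admissible range of $s$. The hypothesis $x^{1/s} \leq (\log x)^2$ gives $s \geq \log x / (2\log\log x)$ but does not by itself ensure $s \leq \log x/\log\log x$, which is what Lemma~\ref{lem_Wolke lemma} requires. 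If $s$ exceeds $s_0 := \log x/\log\log x$, I would simply replace $s$ by $s_0$: since $x^{1/s} \leq x^{1/s_0}$, the set of $x^{1/s}$-smooth integers sits inside the set of $x^{1/s_0}$-smooth integers, so the sum only grows, and $x^{1/s_0} = \log x \leq (\log x)^2$ keeps the working hypothesis in force.

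With $s$ (or $s_0$) now in Wolke's admissible range, Lemma~\ref{lem_Wolke lemma} supplies the bound
\[
\sum_{\substack{m \geq x^{c/4} \\ q_m \leq x^{1/s}}} \frac{1}{m} \ll \exp\!\left(\sum_{p \leq x^{1/s}} \frac{1}{p} - \frac{cs\log s}{16}\right).
\]
By Mertens' theorem, the prime sum equals $\log\log(x^{1/s}) + O(1) = \log(\log x/s) + O(1)$, which by the hypothesis $\log x/s \leq 2\log\log x$ is $O(\log\log\log x)$. For the loss term, the bound $s \geq \log x / (2\log\log x)$ forces $\log s \geq \tfrac{1}{2}\log\log x$ for $x$ large, so $s\log s \geq (\log x)/4$ and hence $\tfrac{cs\log s}{16} \geq c(\log x)/64$. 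Combining, the exponent is at most $-c(\log x)/64 + O(\log\log\log x) \leq -\delta \log x$ for some absolute $\delta>0$ and all sufficiently large $x$, yielding the desired estimate $\ll x^{-\delta}$ with $\delta$ independent of $s$ and $x$.

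The argument is almost entirely routine bookkeeping; the only substantive point is handling the case $s > \log x/\log\log x$, which is disposed of by the monotonicity of smoothness in the parameter. Everything else reduces to Mertens' asymptotic and tracking the dominance of the term $-cs\log s/16$ over the prime sum under the hypothesis $x^{1/s} \leq (\log x)^2$.
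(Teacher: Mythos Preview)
Your argument is correct. The paper itself does not give a self-contained proof of this lemma; it simply cites \cite[Lemma~2.1]{Ch22} and remarks that the slightly different range of $m$ causes no difficulty. Your approach is to recognise that the statement is the special case $F\equiv 1$ of Lemma~\ref{lem_Wolke lemma} (Wolke's bound), modulo the minor issue that the hypothesis $x^{1/s}\le(\log x)^2$ only gives $s\ge \log x/(2\log\log x)$ and not the upper bound $s\le \log x/\log\log x$ required there. You handle this cleanly by monotonicity in the smoothness parameter: replacing $s$ by $s_0=\log x/\log\log x$ when $s>s_0$ only enlarges the sum, and $s_0$ still satisfies $s_0\log s_0\gg\log x$, so the exponential decay goes through. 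The Mertens estimate $\sum_{p\le x^{1/s}}1/p\le \sum_{p\le(\log x)^2}1/p=O(\log\log\log x)$ is then dominated by $cs\log s/16\ge c(\log x)/64$, giving $\ll x^{-\delta}$ with $\delta$ depending only on $c$.

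This is arguably preferable to the paper's treatment, since it derives the lemma entirely from tools already stated in the paper (Lemma~\ref{lem_Wolke lemma}) rather than sending the reader to an external reference; the underlying mechanism (Rankin's trick) is of course the same in both.
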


\begin{proof}
    This is a part of \cite[Lemma~2.1]{Ch22}. The condition for the range of $m$ differs slightly from those in \cite[Lemma~2.1]{Ch22}, but the proof remains applicable. 
\end{proof}

Given any positive integer $r$. Define for any $m\in \mathbb N$, and any $1\le Z\le X$, 
\begin{align*}
    \Phi_r(m;X,Z):= \#\{(d_1,\ldots,d_r) \in \mathbb N^r: \ \lcm(d_1,\ldots,d_r)=m, \  Z \leq d_1, \ldots, d_r \leq X\}.
\end{align*}
\begin{proposition}\label{prop_lcm number hypo for alpha is 2 to the r}
    Let $r$ be a positive integer, $c \in (0,1)$ and $s \geq 4/c$ be a real number.
    
    If $x^{1/s}>(\log x)^2$, then
        \begin{align*}
            \sum_{\substack{m \in S(x^{rc},x^{1/s}) \\ m \geq x^{c/4}}}\frac{\Phi_r(m;x^c,x^{c/4})}{m} \ll (\log x)^{2^r-1}\exp \left(-\frac{cs\log s}{16}\right).
        \end{align*}
        
    If $x^{1/s}\leq (\log x)^2$, then the above sum is $\ll x^{-\delta}$ for some $\delta>0$. In both cases, the implied constants in $\ll$ depend at most on $r$ and $c$.
\end{proposition}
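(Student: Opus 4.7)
\emph{Proof proposal.} The plan is to bound $\Phi_r(m;x^c,x^{c/4})$ above by the range-free counting function
\[
\Psi_r(m):=\#\{(d_1,\ldots,d_r)\in\mathbb N^r:\lcm(d_1,\ldots,d_r)=m\},
\]
and then feed this into Lemma~\ref{lem_Wolke lemma} (Wolke) or Lemma~\ref{lem_Chiriac's lemma} (Chiriac) depending on the size of $x^{1/s}$. The function $\Psi_r$ is multiplicative with $\Psi_r(p^k)=(k+1)^r-k^r$, a polynomial of degree $r-1$ in $k$; in particular $\Psi_r(p^e)\le\alpha_1^e$ for some $\alpha_1=\alpha_1(r)>1$, which is exactly the hypothesis needed in Lemma~\ref{lem_Wolke lemma}, and $\Psi_r(p)=2^r-1$.

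For the first regime $x^{1/s}>(\log x)^2$, I apply Lemma~\ref{lem_Wolke lemma} with $F=\Psi_r$:
\[
\sum_{\substack{m\in S(x^{rc},x^{1/s})\\ m\ge x^{c/4}}}\frac{\Phi_r(m;x^c,x^{c/4})}{m}\ \le\ \sum_{\substack{m\ge x^{c/4}\\ q_m\le x^{1/s}}}\frac{\Psi_r(m)}{m}\ \ll\ \exp\!\left((2^r-1)\sum_{p\le x^{1/s}}\frac{1}{p}-\frac{cs\log s}{16}\right).
\]
Mertens' theorem gives $\sum_{p\le x^{1/s}}1/p=\log\log(x^{1/s})+O(1)$, and since $\log(x^{1/s})\le\log x$, exponentiating produces the advertised $\ll_r(\log x)^{2^r-1}\exp(-cs\log s/16)$.

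For the second regime $x^{1/s}\le(\log x)^2$, I would combine the crude bound $\Psi_r(m)\le d(m)^r\ll_{r,\epsilon}m^{\epsilon}$ with Lemma~\ref{lem_Chiriac's lemma}: since $m\le x^{rc}$ throughout the sum, inserting $m^{\epsilon}\le x^{rc\epsilon}$ and invoking Chiriac's estimate $\sum 1/m\ll x^{-\delta'}$ yields the sum $\ll x^{rc\epsilon-\delta'}$, which collapses to $\ll x^{-\delta}$ once $\epsilon$ is fixed small enough. (Equivalently, the hypothesis forces $s\log s\gg\log x$, so Lemma~\ref{lem_Wolke lemma} alone already returns $x^{-\Omega(1)}$ after absorbing the polylog prefactor.) The only point requiring real care is confirming that $\Psi_r(p^e)$ has at most subexponential growth in $e$, which is immediate from the closed form $(k+1)^r-k^r$; the remaining work is routine bookkeeping and a Mertens-type evaluation of $\sum_{p\le x^{1/s}}\Psi_r(p)/p$.
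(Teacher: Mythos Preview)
Your proof is correct and, in the first regime $x^{1/s}>(\log x)^2$, identical to the paper's: both replace $\Phi_r(m;x^c,x^{c/4})$ by the unrestricted multiplicative function $\Psi_r(m)$ (the paper calls it $\Phi_r(m)$), compute $\Psi_r(p^e)=(e+1)^r-e^r$, and feed this into Lemma~\ref{lem_Wolke lemma}.

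In the second regime your main argument differs slightly from the paper's. The paper bounds $\Psi_r(m)$ uniformly by $(\log x)^{2^r-1}$ via a Rankin-type Euler product over $p\le x^{1/s}$, then applies Lemma~\ref{lem_Chiriac's lemma}. You instead use the cruder pointwise bound $\Psi_r(m)\le d(m)^r\ll_{r,\epsilon} m^{\epsilon}\le x^{rc\epsilon}$ and then invoke Lemma~\ref{lem_Chiriac's lemma}; choosing $\epsilon<\delta'/(rc)$ recovers a power saving. Both routes are valid; yours is shorter, while the paper's keeps the bound on $\Psi_r$ polylogarithmic (which is not actually needed here). One caution: your parenthetical alternative (``Lemma~\ref{lem_Wolke lemma} alone already returns $x^{-\Omega(1)}$'') is not directly justified, since that lemma is stated only for $s\le \log x/\log\log x$, whereas the second regime allows $s$ beyond this range; to make it rigorous you would need an extra monotonicity-in-$s$ argument. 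Your primary argument via $m^{\epsilon}$ avoids this issue entirely.
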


\begin{proof}
    First, suppose $x^{1/s} > (\log x)^2$, i.e., $s < \frac{\log x}{2\log\log x}$. Note that $\Phi_r(m;x^c,x^{c/4})\le \Phi_r(m):=\Phi_r(m;\infty,0)$, the number of $r$-tuples $(d_1,\ldots,d_r)$ with $\lcm(d_1,\ldots,d_r)=m$. Clearly $\Phi_r(m)$ is a multiplicative function, and  for any positive integer $e$, 
    \begin{align*}
        \Phi_r(p^e) 
        =&\#\{ (d_1,\ldots,d_r)\in \mathbb{N}^r: \ \lcm(d_1,\ldots,d_r)=p^i  \text{ for some } i \in \{0,\ldots,e\}\} \\
        &\mbox{ } -\#\{ (d_1,\ldots,d_r)\in \mathbb{N}^r: \ \lcm(d_1,\ldots,d_r)=p^i  \text{ for some } i \in \{0,\ldots,e-1\}\} \\
        =& (e+1)^r-e^r \ll \alpha_1^e
    \end{align*}
    for some constant $\alpha_1$ depending on $r$. Therefore, by Lemma~\ref{lem_Wolke lemma}, we obtain
    \begin{align*}
        \sum_{\substack{m \in S(x^{rc},x^{1/s}) \\ m \geq x^{c/4}}}\frac{\Phi_r(m;x^c,x^{c/4})}{m} \leq \sum_{\substack{m \geq x^{c/4} \\ q_m \leq x^{1/s}}} \frac{\Phi_r(m)}{m}
        \ll \exp\left(\sum_{p \leq x^{1/s}}\frac{2^r-1}{p}-\frac{cs\log s}{16}\right).
    \end{align*}
    Since $\sum_{p \leq Y} p^{-1}  = \log\log Y+ O(1)$, we get the desired result for the case $x^{1/s} > (\log x)^2$.

    Now suppose $x^{1/s} \leq (\log x)^2$. We use the Cauchy-Schwarz inequality to get
    \begin{align*}
        \left(\sum_{\substack{m \in S(x^{rc},x^{1/s}) \\ m \geq x^{c/4}}}\frac{\Phi_r(m;x^c,x^{c/4})}{m}\right)^2 &\leq \sum_{\substack{m \in S(x^{rc},x^{1/s}) \\ m \geq x^{c/4}}}\frac{\Phi_r(m)^2}{m} \sum_{\substack{m \in S(x^{rc},x^{1/s}) \\ m \geq x^{c/4}}} \frac{1}{m}.
    \end{align*}
    Since $m$ is an $x^{1/s}$-smooth number, we evaluate the first sum on the right-hand side as follows:
    \begin{align*}
    \sum_{\substack{m \in S(x^{rc},x^{1/s}) \\ m \geq x^{c/4}}}\frac{\Phi_r(m)^2}{m}  
        &\leq  \prod_{p \leq x^{1/s}}\left(1+\frac{\Phi_r(p)^2}{p}+\frac{\Phi_r(p^2)^2}{p^2}+\cdots\right)  \\ 
        &=\prod_{p \leq x^{1/s}}\left(1+\frac{(2^r-1)^2}{p}+O\left(\frac{1}{p^2}\right)\right)\\
        &=\exp\left(\sum_{p \leq x^{1/s}} \log\left(1+\frac{(2^r-1)^2}{p}+O\left(\frac{1}{p^2}\right)\right)\right) \\ 
        &=\exp\left(\sum_{p \leq x^{1/s}}\frac{(2^r-1)^2}{p}+O(1) \right)\\ 
        &\ll (\log x)^{(2^r-1)^2}.
    \end{align*}
     Applying Lemma~\ref{lem_Chiriac's lemma} to the second sum yields the desired result.
\end{proof}

\begin{proposition}\label{prop4.4}
    Let $r$ be a positive integer, $c \in (0,1)$ and $x\ge 3$. 
    If $4/c \leq s < \frac{\log x}{2\log\log x}$, then we have
    \begin{align*}%\label{eqn_1/lcm(d_1,...,d_r) partial sum rewritten using Phi}
        \sum_{\substack{d_1,\ldots,d_r \in S(x^c,x^{1/s}) \\ d_1,\ldots,d_r \geq x^{c/4}}}\frac{1}{\lcm(d_1,\ldots,d_r)} \ll (\log x)^{2^r-1}\exp \left(-\frac{cs\log s}{16}\right).
    \end{align*}
    If $s \geq \frac{\log x}{2\log\log x}$, then the above sum is $\ll x^{-\delta}$ for some $\delta>0$. In both cases, the implied constant depends at most on $r$ and $c$.
\end{proposition}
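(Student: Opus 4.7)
The plan is to reindex the inner sum by the value $m := \lcm(d_1,\ldots,d_r)$, after which the estimate becomes an immediate application of Proposition~\ref{prop_lcm number hypo for alpha is 2 to the r}.

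First I would verify that the reindexing is faithful. For any tuple $(d_1,\ldots,d_r)$ with each $d_i\in S(x^c,x^{1/s})$ and $d_i\ge x^{c/4}$, set $m=\lcm(d_1,\ldots,d_r)$; then $x^{c/4}\le \max_i d_i\le m\le d_1\cdots d_r\le x^{rc}$, and every prime dividing $m$ divides some $d_i$, hence is at most $x^{1/s}$, so $m\in S(x^{rc},x^{1/s})$ and $m\ge x^{c/4}$. Conversely, for such an $m$, any tuple $(d_1,\ldots,d_r)$ with $\lcm(d_1,\ldots,d_r)=m$ and $x^{c/4}\le d_i\le x^c$ automatically has $d_i\mid m$, so each $d_i$ inherits $x^{1/s}$-smoothness from $m$ for free. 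The number of such tuples is exactly $\Phi_r(m;x^c,x^{c/4})$, so
\begin{align*}
\sum_{\substack{d_1,\ldots,d_r\in S(x^c,x^{1/s})\\ d_1,\ldots,d_r\ge x^{c/4}}}\frac{1}{\lcm(d_1,\ldots,d_r)}
= \sum_{\substack{m\in S(x^{rc},x^{1/s})\\ m\ge x^{c/4}}}\frac{\Phi_r(m;x^c,x^{c/4})}{m}.
\end{align*}

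Next I would appeal to Proposition~\ref{prop_lcm number hypo for alpha is 2 to the r}. The hypothesis $s\le \log x/(2\log\log x)$ is equivalent to $x^{1/s}\ge(\log x)^2$, and $4/c\le s$ carries over directly. When $x^{1/s}>(\log x)^2$, the first case of Proposition~\ref{prop_lcm number hypo for alpha is 2 to the r} gives precisely the bound $(\log x)^{2^r-1}\exp(-cs\log s/16)$. At the boundary $x^{1/s}=(\log x)^2$, i.e.\ $s=\log x/(2\log\log x)$, Lemma~\ref{lem_Wolke lemma} (used in the proof of that first case) still applies, since its hypothesis $s\le \log x/\log\log x$ is comfortably satisfied; hence the same bound persists at the boundary. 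All analytic effort is therefore absorbed into Proposition~\ref{prop_lcm number hypo for alpha is 2 to the r}, and the only delicate point is the equivalence of the smoothness condition on the individual $d_i$ and on $m$, which is automatic because each $d_i$ divides $m$ in the reindexed sum; I do not anticipate any serious obstacle beyond the careful bookkeeping of the index change.
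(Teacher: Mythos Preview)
Your proposal is correct and follows essentially the same approach as the paper: reindex the sum by $m=\lcm(d_1,\ldots,d_r)$, observe that the smoothness of the $d_i$ is equivalent to the smoothness of $m$, and then invoke the first case of Proposition~\ref{prop_lcm number hypo for alpha is 2 to the r}. Your treatment is in fact slightly more careful than the paper's, which simply asserts the reindexing identity and cites Proposition~\ref{prop_lcm number hypo for alpha is 2 to the r} without separately discussing the boundary $x^{1/s}=(\log x)^2$.
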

\begin{proof}
For any $x^{1/s}$-smooth integers $d_1,\ldots, d_r$, $m= \lcm(d_1,\ldots, d_r)$  is $x^{1/s}$-smooth, and the converse is clearly true. Therefore,
    \begin{align*}%\label{eqn_1/lcm(d_1,...,d_r) partial sum rewritten using Phi}
        \sum_{\substack{d_1,\ldots,d_r \in S(x^c,x^{1/s}) \\ d_1,\ldots,d_r \geq x^{c/4}}}\frac{1}{\lcm(d_1,\ldots,d_r)}=\sum_{\substack{m \in S(x^{rc},x^{1/s}) \\ m \geq x^{c/4}}}\frac{\Phi_r(m;x^c,x^{c/4})}{m},
    \end{align*}
where the (extra) constraint $ m \geq x^{c/4}$ is implied by  $d_1,\ldots, d_r\ge x^{c/4}$.
The result follows from Proposition~\ref{prop_lcm number hypo for alpha is 2 to the r}.
\end{proof}

\section{Proof of Theorem~\ref{thm_main theorem general}}\label{sec_proof of main thm in general}

   \subsection{Lower bound}    
    Choose $c'= c_1/r$, where $c_1$ is the number in (L). Plainly,  
    \begin{align*}
        \sum_{a\in A_x} d(n(a))^r = \sum_{a\in A_x} \sum_{d_1,\ldots,d_r \mid n(a)} 1 \geq \sum_{a\in A_x} \sum_{\substack{d_1,\ldots,d_r \leq x^{c'} \\ d_1,\ldots,d_r \mid n(a)}} 1=\sum_{\substack{d_1,\ldots,d_r \leq x^{c'}\\ \delta = \lcm (d_1,\ldots,d_r)} } \pi(x,\delta).
    \end{align*}
By (L) and \eqref{eqn_1/lcm formula}, for all sufficiently large $x$, the right-hand side is bounded below by
    \begin{align*}
 \#A_x\sum_{d_1,\ldots,d_r \leq x^{c'}}\frac{1}{\lcm(d_1,\ldots,d_r)} 
        \gg  \#A_x(\log x)^{2^r-1}.
    \end{align*}
    This completes the proof.

\subsection{Upper bound}    
    Let $r\ge 1$ be any given integer and $x>0$ be any number.  
    Let $n(a)=p_1\cdots p_J$ be a prime factorization of $n(a)$ with $p_1 \leq \cdots \leq p_J$. Denote $j \leq J$ to be the largest index such that $p_1 \cdots p_j \leq x^c$, where $c:=c_0/r$ and $c_0$ is the constant in (U2). (Both $J$ and $j$ depend on $a$ and we suppress the dependence for simplicity). Decompose 
    \begin{align*}
        \sum_{a\in A_x} d(n(a))^r= \sum_{\substack{a \in A_x \\ j=0}}d(n(a))^r+\sum_{\substack{a\in A_x\\ j>0}}d(n(a))^r=: \Sigma_1 + \Sigma_2, \mbox{ say}.
    \end{align*}
    
    When $j=0$, i.e., $p_1,\ldots,p_J>x^c$, we have
    \begin{align*}
        x^{Jc}<  |n(a)|\le x^{\beta}
    \end{align*}
    by (U1). Thus $J<\beta/c$ and this implies $d(n(a))^r\le 2^{r^2\beta /c_0}$. The first sum $\Sigma_1$ is $O(\# A_x)$.

    We turn to the second sum $\Sigma_2$, which can be expressed as 
    \begin{align*}
        \sum_{\substack{a \in A_x \\ J-j < (2\beta+1)/c}}d(n(a))^r+\sum_{\substack{a\in A_x \\ J-j \geq (2\beta+1)/c}}d(n(a))^r=:S_1+S_2.
    \end{align*}
    (The condition $j>0$ will be tacitly assumed.)
    If $J-j < (2\beta+1)/c$, then
    \begin{align*}
        d(p_1\cdots p_J)^r \leq d(p_1\cdots p_j)^r2^{(2\beta+1)r/c}  \ll \sum_{d_1,\ldots,d_r \mid p_1\cdots p_j}1 \leq \sum_{\substack{d_1,\ldots,d_r \leq x^c \\ d_1,\ldots,d_r \mid n(a)}}1.
    \end{align*}
    Therefore,
    \begin{align*}
        S_1& \ll \sum_{\substack{a\in A_x \\ J-j < (2\beta+1)/c}}\sum_{\substack{d_1,\ldots,d_r \leq x^c \\ d_1,\ldots,d_r \mid n(a)}}1 \le \sum_{d_1,\ldots,d_r \leq x^c}\sum_{\substack{a\in A_x  \\ n(a) \equiv 0\pmod{\delta}\\ \delta =  \lcm(d_1,\ldots,d_r)} }1 = \sum_{\substack{d_1,\ldots,d_r \leq x^c\\ \delta =  \lcm(d_1,\ldots,d_r)} }\pi(x,\delta).
    \end{align*}
    Note that $\delta \le x^{rc} = x^{c_0}$. By (U2) and Proposition~\ref{prop_1/phi(lcm) formula}, the resulting sum on the right-hand side is 
    \begin{align*}
        \ll \sum_{d_1,\ldots,d_r \leq x^c}\frac{\#A_x}{\phi(\lcm(d_1,\ldots,d_r))} \ll_{c_0,r} \#A_x(\log x)^{2^r-1}.
    \end{align*}
    
    Now, we estimate $S_2$. If $J-j \geq (2\beta+1)/c$, then we have $p_{j+1}<x^{c/2}$ because, otherwise, i.e., $p_{j+1} \geq x^{c/2}$, we have 
    \begin{align*}
        |n(a)| \geq p_{j+1}\cdots p_J \geq x^{c(J-j)/2} \geq x^{\beta+1/2},
    \end{align*}
    which contradicts (U1). Moreover, we have $p_1\cdots p_j > x^{c/2}$ since $p_1\cdots p_{j+1} \ge x^c$ by our choice of $j$ and the bound $p_{j+1}<x^{c/2}$. Recall $p_J\ge \cdots \ge p_j\ge \cdots \ge p_1$. 
    Choose the positive integer $s=s(p_j)$ so that 
    \begin{align*}
        x^{1/(s+1)} \leq p_j <x^{1/s}.
    \end{align*}
    Then $p_{j+1}\cdots p_J \geq x^{\frac{J-j}{s+1}}$, and consequently $J-j \leq (s+1)\beta$ by (U1). We split $S_2$ into two sums 
    \begin{align*}
        S_2:= S_3 + S_4
    \end{align*}
    according as $s < \frac{\log x}{2\log\log x}$ (equivalently, $x^{1/s}>(\log x)^2$) or not. 

    The sum $S_3$ runs over $a\in A_x$ such that $n(a)=p_1\cdots p_J$ satisfies the prime factor $p_j\in [x^{1/(s+1)}, x^{1/s})$ and $J-j\le (s+1)\beta$ with $s< \frac{\log x}{2\log\log x}$. Apparently for such $n(a)$, we have
    \begin{align}\label{eqn_d^r(n(p)) for S_3}
        d(n(a))^r \le d(p_1\cdots p_j)^r 2^{r(s+1)\beta}.
    \end{align}
    By symmetry, $d(n) \le 2\cdot \#\{d\mid n: d\ge \sqrt{n}\}$.  Applying $p_1\cdots p_j>x^{c/2}$ and $p_i< x^{1/s}$ for $1\le i\le j$, we infer that 
    \begin{align}\label{eqn_d^r(p_1...p_j) for S_3}
        d(p_1\cdots p_j)^r \ll_r \sum_{\substack{d_1,\ldots,d_r \mid p_1\cdots p_j \\ d_1,\ldots,d_r \geq x^{c/4}}}1=\sum_{\substack{d_1,\ldots,d_r \in S(x^c,x^{1/s}) \\d_1,\ldots,d_r \mid p_1\cdots p_j \\ d_1,\ldots,d_r \geq x^{c/4}}}1\leq \sum_{\substack{d_1,\ldots,d_r \in S(x^c,x^{1/s}) \\n(a) \equiv 0 \pmod{\lcm(d_1,\ldots,d_r)} \\ d_1,\ldots,d_r \geq x^{c/4}}}1.
    \end{align}
    With \eqref{eqn_d^r(n(p)) for S_3} and \eqref{eqn_d^r(p_1...p_j) for S_3}, it follows that 
    \begin{align*}
    S_3
        &\ll_r \sum_{s}2^{r(s+1)\beta}\sum_{a\in A_x}\sum_{\substack{d_1,\ldots,d_r \in S(x^c,x^{1/s}) \\n(a) \equiv 0 \pmod{\lcm(d_1,\ldots,d_r)} \\ d_1,\ldots,d_r \geq x^{c/4}}}1 \\
        &\ll_{r,\beta}   \sum_s 2^{rs\beta} \sum_{\substack{d_1,\ldots,d_r \in S(x^c,x^{1/s}) \\ d_1,\ldots,d_r \geq x^{c/4}}} \pi(x,\lcm(d_1,\ldots,d_r)) \\
        &\ll \# A_x \sum_s 2^{rs\beta}\sum_{\substack{d_1,\ldots,d_r \in S(x^c,x^{1/s}) \\ d_1,\ldots,d_r \geq x^{c/4}}}\frac{1}{\phi(\lcm(d_1,\ldots,d_r))}.
    \end{align*}

    In light of the condition $s\ge 4/c$ in Proposition~\ref{prop4.4}, we handle the inner sum with two treatments.  When  $s < c/4$, we relax the constraints in the inner multiple sum and majorize it by
    \begin{align*}
        \sum_{d_1,\ldots,d_r\le x^c}\frac{1}{\phi(\lcm(d_1,\ldots,d_r))}. 
    \end{align*}
    By Proposition~\ref{prop_1/phi(lcm) formula}, this sum is  $\asymp (\log x)^{2^r-1}$ which is sufficient for our purpose.  While for $s \geq c/4$, we apply  the Cauchy-Schwarz inequality to get
    \begin{align}\label{eqn_1/phi(lcm) Cauch-Schwartz}
        \sum_{\substack{d_1,\ldots,d_r \in S(x^c,x^{1/s}) \\ d_1,\ldots,d_r \geq x^{c/4}}}\frac{1}{\phi(\lcm(d_1,\ldots,d_r))} &\leq \left(\sum_{\substack{d_1,\ldots,d_r \in S(x^c,x^{1/s}) \\ d_1,\ldots,d_r \geq x^{c/4}}}\frac{1}{\lcm(d_1,\ldots,d_r)}\right)^{1/2} \notag \\
        &\times \left(\sum_{d_1,\ldots,d_r \leq x^c}\frac{\lcm(d_1,\ldots,d_r)}{\phi(\lcm(d_1,\ldots,d_r))^2}\right)^{1/2}.
    \end{align}
    As $4/c < s < \frac{\log x}{2\log\log x}$, we can apply Proposition~\ref{prop4.4}. Together with  Proposition~\ref{prop_lcm/phi(lcm)^2 formula},  the right-hand side of \eqref{eqn_1/phi(lcm) Cauch-Schwartz} is majorized by $$(\log x)^{2^r-1}\exp \left(-\frac{cs\log s}{32}\right).$$
    Hence we obtain  
    \begin{align*}
        S_3 &\ll \# A_x (\log x)^{2^r-1} \sum_s 2^{rs\beta}\exp \left(-\frac{cs\log s}{32}\right) \ll \# A_x (\log x)^{2^r-1},
    \end{align*}
    for $2^{rs\beta}= \exp(sO(1))$.

    It remains to estimate $S_4$. Now $n(a)= p_1\cdots p_J$ and $s \ge \frac{\log x}{2\log \log x}$.  By the well-known estimate
    $$d(n) \ll \exp\left(\frac{\log n}{\log\log n}\right)$$
    for $n\geq 3$ (and $n(a)< x^\beta)$, we have
    \begin{align*}
        d(n(a))^r \le d(p_{j+1}\cdots p_J)^r \sum_{\substack{d_1,\ldots,d_r \in S(x^c,x^{1/s}) \\d_1,\ldots,d_r \mid p_1\cdots p_j \\ d_1,\ldots,d_r \geq x^{c/4}}}1 \ll \exp\left(\frac{r\beta\log x}{\log\log x}\right)\sum_{\substack{d_1,\ldots,d_r \in S(x^c,x^{1/s}) \\n(a) \equiv 0 \pmod{\lcm(d_1,\ldots,d_r)} \\ d_1,\ldots,d_r \geq x^{c/4}}}1
    \end{align*}
    From $(2\le)$ $ p_j< x^{1/s}$, we have $s \leq (\log x)/\log 2$. We deduce that
    \begin{align*}
        S_4 \ll \#A_x \sum_{\frac{\log x}{2\log\log x} \leq s \leq \frac{\log x}{\log 2}}\exp\left(\frac{r\beta \log x}{\log\log x}\right)\sum_{\substack{d_1,\ldots,d_r \in S(x^c,x^{1/s}) \\ d_1,\ldots,d_r \geq x^{c/4}}}\frac{1}{\phi(\lcm(d_1,\ldots,d_r))}.
    \end{align*}
    Recalling the well-known inequality $m/\phi(m)\ll \log\log m$, we get
    \begin{align*}
        S_4 \ll \#A_x  \sum_{\frac{\log x}{2\log\log x} \leq s \leq \frac{\log x}{\log 2}}\exp\left(\frac{r\beta \log x}{\log\log x}\right)(\log\log x)\sum_{\substack{d_1,\ldots,d_r \in S(x^c,x^{1/s}) \\ d_1,\ldots,d_r \geq x^{c/4}}}\frac{1}{\lcm(d_1,\ldots,d_r)}.
    \end{align*}
    Using Proposition~\ref{prop4.4}, the right-hand side of the above is
    \begin{align*}
        &\ll \#A_x \cdot x^{-\delta} \sum_{\frac{\log x}{2\log\log x} \leq s \leq \frac{\log x}{\log 2}}\exp\left(\frac{r\beta \log x}{\log\log x}\right)(\log\log x) 
%        \\
%        &\ll \#A_x x^{-\delta}(\log x)(\log\log x)\exp\left(\frac{r\log x}{\log\log x}\right) 
        \ll \#A_x.
    \end{align*}

Our proof of Theorem~\ref{thm_main theorem general} is now complete.

\section{Preliminaries on $L$-functions}\label{sec_preliminaries on L-functions}

\subsection{Automorphic $L$-functions}\label{sec_standard L-functions}

Let us briefly recall some definitions and analytic properties of automorphic $L$-functions. For more detailed descriptions, we refer the reader to \cite{ST19} and the references contained therein. 

Consider a cuspidal automorphic representation $\pi=\bigotimes_{p<\infty}\pi_p$ of $\mathrm{GL}_n(\mathbb A_{\mathbb Q})$ with unitary central character. We assume that $\pi$ is normalized so that it is trivial on the diagonal embedded copy of $\mathbb R^{+}$. We denote its contragredient by $\tilde \pi$ and its conductor by $q(\pi)$. The $L$-function of $\pi$ is given by
\begin{align*}
    L(s,\pi):=\prod_{p < \infty}L(s,\pi_p).
\end{align*}
This product converges absolutely for $\Re(s)>1$. For each $p$, there exist complex numbers $\alpha_{\pi,1}(p), \ldots, \alpha_{\pi,n}(p)$ (called the Satake parameters) such that the local $L$-function $L(s,\pi_p)$ is given by
\begin{align*}
    L(s,\pi_p)=\prod_{i=1}^n \left(1-\frac{\alpha_{\pi,i}(p)}{p^s}\right)^{-1}.
\end{align*}
Note that the Satake parameters $\alpha_{\pi,i}(p)$ are non-zero if $\pi_p$ is unramified.

At the archimedean place $\infty$, the local $L$-factor $L_{\infty}(s,\pi)=L(s,\pi_{\infty})$ (called the gamma factor of $L(s,\pi)$) is given by
\begin{align*}
    L_{\infty}(s,\pi)=\prod_{i=1}^n \Gamma_{\mathbb R}(s+\mu_{\pi,i}),
\end{align*}
where $\Gamma_{\mathbb R}(s):=\pi^{-s/2}\Gamma(s/2)$ and $\mu_{\pi,1},\ldots,\mu_{\pi,n}$ are the Langlands parameters. It is known that we have the bounds (\cite{LRS99})
\begin{align*}
    |\alpha_{\pi,i}(p)| \leq p^{\theta_n}, \quad \Re(\mu_{\pi,i}) \geq -\theta_n
\end{align*}
for some $0 \leq \theta_n \leq \frac{1}{2}-\frac{1}{n^2+1}$. The (generalized) Ramanujan-Petersson conjecture predicts that $\theta_n$ can be zero.

To further describe $\mu_{\pi, i}$, let us introduce more descriptions of the archimedean $L$-factors. Langlands proved that there exist $1$- or $2$-dimensional irreducible representations $\phi_a$ of the Weil group $W_{\mathbb R}$ such that
\begin{align*}
    \pi_{\infty}=\bigoplus_{a \in \mathcal A}\phi_a.
\end{align*}
It follows that
\begin{align*}
    L_{\infty}(s,\pi)=\prod_{a \in \mathcal A}L(s,\phi_a),
\end{align*}
and each of $L(s,\phi_a)$ can be described as follows.

\begin{enumerate}
    \item If $\phi$ is $1$-dimensional, then there exist $\nu \in \mathbb C$, $\epsilon \in \{0,1\}$ such that
    \begin{align}\label{eqn_L(s,phi) for 1-dim}
        L(s,\phi)=\Gamma_{\mathbb R}(s+\nu+\epsilon).
    \end{align}
    \item If $\phi$ is $2$-dimensional, then there exist $\nu \in \mathbb C$, $\kappa \in \mathbb Z$ such that
    \begin{align}\label{eqn_L(s,phi) for 2-dim}
        L(s,\phi)=\Gamma_{\mathbb C}(s+\nu+|\kappa|/2)=\Gamma_{\mathbb R}(s+\nu+|\kappa|/2)\Gamma_{\mathbb R}(s+\nu+|\kappa|/2+1).
    \end{align}
\end{enumerate}
In both cases, the bound $|\Re(\nu)|<1/2$ holds (\cite[Appendix A.3]{RS96}).

The complete $L$-function of $\pi$ is defined as
\begin{align*}
    \Lambda(s,\pi):=L_{\infty}(s,\pi)L(s,\pi).
\end{align*}
The function $\Lambda(s,\pi)$ extends to an entire function of order $1$ if $\pi$ is not trivial. If $\pi$ is trivial, then $L(s,\pi)$ is the Riemann zeta function, which has a simple pole at $s=1$. The function $\Lambda(s,\pi)$ satisfies the following functional equation:
\begin{align*}
    \Lambda(s,\pi)=\epsilon(\pi)q(\pi)^{1/2-s}\Lambda(1-s,\tilde \pi).
\end{align*}

For $t \in \mathbb R$, we define $\mathfrak q(t,\pi)$ as
\begin{align*}
    \mathfrak q(t,\pi):=q(\pi)\prod_{i=1}^n (1+|{\rm i}t+\mu_{\pi,i}|).
\end{align*}
The analytic conductor of $\pi$ is defined by
\begin{align*}
    \mathfrak q(\pi):=\mathfrak q(0,\pi).
\end{align*}

\subsection{Artin $L$-functions}\label{sec_Artin L-ftn}

Suppose $K/\mathbb Q$ is a finite Galois extension, and let $G$ denote its Galois group. For a prime $\mathfrak p$ above $p$, we let $\sigma_{\mathfrak p}$ be the Frobenius element at $\mathfrak p$, and $I_{\mathfrak p}$ be the inertia group for $\mathfrak p$. For a finite-dimensional complex representation $\rho: G \to \mathrm{GL}(V)$ of $G$ and its character $\chi$, the Artin $L$-function $L(s,\chi,K/\mathbb Q)$ is defined as follows,
\begin{align*}
    L(s,\chi,K/\mathbb Q)=\prod_p \det(1-\rho(\sigma_{\mathfrak p})p^{-s}|V^{I_{\mathfrak p}})^{-1}.
\end{align*}
In particular, for unramified $p$ in $K$, the local $L$-factor is $L_p(s,\chi,K/\mathbb Q)=\det(1-\rho(\sigma_p)p^{-s})^{-1}$. Note that since $\rho$ is $\chi(1)$-dimensional, $\det(1-\rho(\sigma_p)p^{-s})$ is a polynomial in $p^{-s}$ of degree $\chi(1)$.

The gamma factor for the Artin $L$-function is given by
\begin{align*}
    L_{\infty}(s,\chi,K/\mathbb Q)=\Gamma_{\mathbb R}(s)^{n^+(\chi)}\Gamma_{\mathbb R}(s+1)^{n^-(\chi)},
\end{align*}
where $n^+(\chi)$ and $n^-(\chi)$ are the dimensions of $(+)$- and $(-)$-eigenspace of $\rho(c)$, respectively, for $c$ the complex conjugation. The conductor of the Artin $L$-function is defined as
\begin{align*}
    q(\chi):=\mathrm{Norm}_{K/\mathbb Q}(n(\rho)),
\end{align*}
where $n(\rho)$ is the global Artin conductor. One has the inequality (\cite{MMS88}, \cite{Se81}. See also \cite[(2.6)]{Wo19}):
\begin{align*}
    \log q(\chi) \ll \chi(1)\mathcal M(K/\mathbb Q),
\end{align*}
where
\begin{align}\label{M}
    \mathcal M(K/\mathbb Q):=[K:\mathbb Q]\prod_{p \text{ ramified in } K}p.
\end{align}

The Langlands reciprocity conjecture predicts that for any irreducible character $\chi$, there is an attached cuspidal automorphic representation of $\mathrm{GL}_{\chi(1)}(\mathbb A_{\mathbb Q})$, say $\pi_{\chi}$, that satisfies
\begin{align*}
    L(s,\chi,K/\mathbb Q)=L(s,\pi_{\chi}).
\end{align*}
Note that in this situation, $q(\pi_{\chi})=q(\chi)$ and the Langlands parameter $\mu_{\pi_{\chi},i}$ is either $0$ or $1$.

\subsection{$L$-functions of $\mathrm{Sym}^m f$}\label{sec_L ftns on sym}

Let $f$ be a non-CM newform of weight $k$ and level $N$ with trivial character. 
The $L$-function associated with $f$ is given by
\begin{align*}
    L(s,f)=\sum_{n=1}^{\infty}\frac{\lambda_f(n)}{n^s}=\prod_{p}(1-\alpha_p p^{-s})^{-1}(1-\beta_p p^{-s})^{-1}.
\end{align*}
Associated with $f$, we have  a regular algebraic cuspidal automorphic representation $\pi_f$ of $\mathrm{GL}_2(\mathbb A_{\mathbb Q})$ and an ($\ell$-adic) Galois representations $\rho_f$ of $\mathrm{Gal}(\overline{\mathbb Q}/\mathbb Q)$. It is known that $L(s,f)= L(s, \pi_f)=L(s,\rho_f)$.
Its symmetric power $L$-functions are of the form
\begin{align*}
    L(s,\mathrm{Sym}^m f):=\prod_{p \mid N}L_p(s,\mathrm{Sym}^m f)\prod_{p \nmid N}\prod_{i=0}^m (1-\alpha_p^i \beta_p^{m-i}p^{-s})^{-1},
\end{align*}
where $\prod_{p \mid N}L_p(s,\mathrm{Sym}^m \pi_f)$ accounts for the local factors at ramified primes. Indeed, one may define $L(s,\mathrm{Sym}^m f):=L(s,\mathrm{Sym}^m \rho_f)$. Due to the breakthrough result on the Langlands functoriality of symmetric powers by Newton and Thorne \cite{NT21a, NT21b}, it was proven that there exists an automorphic representation $\mathrm{Sym}^m(\pi_f)$ of $\mathrm{GL}_{m+1}(\mathbb A_{\mathbb Q})$ which gives a Galois representation $\mathrm{Sym}^m(\rho_f)$ via the Langlands correspondence. Consequently, $L(s,\mathrm{Sym}^m f) = L(s,\mathrm{Sym}^m \pi_f) = L(s,\mathrm{Sym}^m \rho_f)$ have nice analytic properties as in Section~\ref{sec_standard L-functions}. In details, we have the following. 
\begin{enumerate}
    \item The conductor $q(\mathrm{Sym}^m f)$ satisfies
    \begin{align*}
        q(\mathrm{Sym}^m f)=O(N^{am}),
    \end{align*}
    where $a$ is a constant not depends on $m$ (\cite[$\mathsection$~5]{Ro07}).
    \item The gamma factor of $L(s,\mathrm{Sym}^m f)$ is given by
    \begin{align*}
        L_{\infty}(s,\mathrm{Sym}^m f)=\begin{cases}
            \prod_{i=1}^{\frac{m+1}{2}}\Gamma_{\mathbb C}\left(s+\left(i-\frac{1}{2}\right)(k-1)\right), & \text{if $m$ is odd}, \\
            \Gamma_{\mathbb R}(s+\epsilon)\prod_{i=1}^{\frac{n}{2}}\Gamma_{\mathbb C}(s+i(k-1)), & \text{if $m$ is even}.
        \end{cases}
    \end{align*}
    \item Let $\delta(m)=0$ if $m=0$ and $\delta(m)=1$ otherwise. The function $s^{\delta(m)}(s-1)^{\delta(m)}\Lambda(s,\mathrm{Sym}^m f)$ extends to an entire function on $\mathbb C$ and satisfies the functional equation
    \begin{align*}
        \Lambda(s,\mathrm{Sym}^m f)=\epsilon(\mathrm{Sym}^m f)q(\mathrm{Sym}^m f)^{1/2-s}\Lambda(1-s,\mathrm{Sym}^m f).
    \end{align*}
\end{enumerate}

\subsection{Rankin-Selberg $L$-functions}\label{sec_Rankin-Selberg L ftn}

Let $\pi$ and $\pi'$ be cuspidal automorphic representations of $\mathrm{GL}_n(\mathbb A_{\mathbb Q})$ and $\mathrm{GL}_{n'}(\mathbb A_{\mathbb Q})$, respectively, both with unitary central character and be normalized as in $\mathsection$~\ref{sec_standard L-functions}. For each prime $p$, the local Rankin-Selberg factor is
\begin{align*}
    L(s,\pi_p \times \pi'_p)=\prod_{i=1}^n \prod_{j=1}^{n'}\left(1-\frac{\alpha_{\pi \times \pi', i, j}(p)}{p^s}\right)^{-1}.
\end{align*}
If both of $\pi_p$ and $\pi_p'$ are unramified, then the Satake parameter $\alpha_{\pi \times \pi', i, j}(p)$ is given by
\begin{align*}
    \alpha_{\pi \times \pi', i, j}(p)=\alpha_{\pi,i}(p)\alpha_{\pi',j}(p).
\end{align*}
We refer the reader to \cite[Appendix]{ST19} for a complete description of $\alpha_{\pi \times \pi', i, j}(p)$. The Rankin-Selberg $L$-function is an $L$-function associated with $\pi$ and $\pi'$, which is given by
\begin{align*}
    L(s,\pi \times \pi')=\prod_p L(s,\pi_p \times \pi'_p).
\end{align*}
The function $L(s,\pi \times \pi')$ converges absolutely for $\Re(s)>1$, as well as $L(s,\pi)$. Denote its conductor and analytic conductor by $q(\pi \times \pi')$ and $\mathfrak q(\pi \times \pi')$, respectively. By Bushnell and Henniart \cite{BH97} and Brumley \cite[Appendix]{HB19}, one has
\begin{align*}
    q(\pi \times \pi') \mid q(\pi)^{n'}q(\pi')^n \quad \text{and} \quad \mathfrak q(\pi \times \pi') \ll \mathfrak q(\pi)^{n'}\mathfrak q(\pi')^n.
\end{align*}

Moving on to the archimedean places, the gamma factor is given by
\begin{align*}
    L_{\infty}(s,\pi \times \pi')=\prod_{i=1}^n\prod_{j=1}^{n'}\Gamma_{\mathbb R}(1+\mu_{\pi \times \pi',i,j}),
\end{align*}
which is described as follows: let $\pi_{\infty}=\bigoplus_{a \in \mathcal A}\phi_a$ and $\pi'_{\infty}=\bigoplus_{b \in \mathcal B}\phi_b'$, where $\phi_a$ and $\phi_b'$ are $1$- or $2$-dimensional irreducible representations of the Weil group $W_{\mathbb R}$. Let $\nu$, $\nu'$, $\epsilon$, $\epsilon'$, $\kappa$, and $\kappa'$ be given by \eqref{eqn_L(s,phi) for 1-dim}, \eqref{eqn_L(s,phi) for 2-dim} for $\phi$ and $\phi'$.
\begin{enumerate}
    \item If $\phi$ and $\phi'$ are $1$-dimensional, then
    \begin{align*}
        L(s,\phi \otimes \phi')=\Gamma_{\mathbb R}(s+\nu+\nu'+\epsilon''),
    \end{align*}
    where $\epsilon'' \in \{0,1\}$ satisfies $\epsilon'' \equiv \epsilon+\epsilon' \pmod 2$. 
    \item If $\phi$ is $1$-dimensional and $\phi'$ is $2$-dimensional, then
    \begin{align*}
        L(s,\phi \otimes \phi')=\Gamma_{\mathbb C}(s+\nu+\nu'+|\kappa'|/2).
    \end{align*}
    \item If $\phi$ and $\phi'$ are $2$-dimensional, then
    \begin{align*}
        L(s,\phi \otimes \phi')=\Gamma_{\mathbb C}(s+\nu+\nu'+|\kappa+\kappa'|/2)\Gamma_{\mathbb C}(s+\nu+\nu'+|\kappa-\kappa'|/2).
    \end{align*}
\end{enumerate}
The function $L_{\infty}(s,\pi \times \pi')$ is given by
\begin{align*}
    L_{\infty}(s,\pi \times \pi')=\prod_{a \in \mathcal A}\prod_{b \in \mathcal B}L(s,\phi_a \otimes \phi_b).
\end{align*}
From the descriptions of the Satake and Langlands parameters, one has
\begin{align*}
    \alpha_{\pi \times \pi', i, j} \leq p^{\theta_n+\theta_{n'}}, \quad \Re(\mu_{\pi \times \pi', i, j}) \geq -\theta_n-\theta_{n'}
\end{align*}
and in particular, the Ramanujan--Petersson conjecture for $L(s,\pi \times \pi')$ will follow if it holds for $L(s,\pi)$ and $L(s,\pi')$.

The complete $L$-function
\begin{align*}
    \Lambda(s,\pi \times \pi')=L_{\infty}(s,\pi \times \pi')L_{\infty}(s,\pi \times \pi')
\end{align*}
extends to a meromorphic function on $\mathbb C$. If $\pi \not\simeq \widetilde{\pi}'$, the function $\Lambda(s,\pi \times \pi')$ is entire, and if $\Lambda(s,\pi \times \pi')$ admits poles, then such poles must be simple and arise at $s=0,1$. The function $\Lambda(s,\pi \times \pi')$ satisfies the functional equation
\begin{align*}
    \Lambda(s,\pi \times \pi')=\epsilon(\pi \times \pi')q(\pi \times \pi')^{1/2-s}\Lambda(1-s,\tilde{\pi} \times \tilde{\pi}').
\end{align*}

\subsection{$L$-functions of $\mathrm{Sym}^m f \otimes \chi$ and Hypothesis A}\label{sec_L ftns on sym otimes chi}

Let $f$ be a non-CM newform of weight $k$ and level $N$ with trivial character. 
Suppose $\chi$ is a non-trivial irreducible character of $\mathrm{Gal}(K/\mathbb Q)$ as described in $\mathsection$~\ref{sec_Artin L-ftn}. (When $\chi$ is trivial, $L(s,\mathrm{Sym}^m f \otimes \chi)=L(s,\mathrm{Sym}^m f)$.) The Langlands reciprocity conjecture predicts that there exists a cuspidal automorphic representation $\pi_{\chi}$ of $\mathrm{GL}_{\chi(1)}(\mathbb A_{\mathbb Q})$ associated with $\chi$. With  $\mathsection$~\ref{sec_Rankin-Selberg L ftn}, we infer the following (hypothetical) properties of the Rankin-Selberg $L$-function $L(s,\mathrm{Sym}^m f \otimes \chi)$. 
\begin{hypothesis}\label{hypo_nice analytic properties of L-function}
    Let $K/\mathbb Q$ be a finite Galois extension with Galois group $G$. Suppose $G$ is isomorphic to a subgroup of $\mathrm{GL}_2(\mathbb Z/\delta Z)$ for some $\delta \in \mathbb N$. Let $\chi$ be an irreducible character of $\rho_{\chi}$, where $\rho_{\chi}$ is a finite-dimensional complex representation of $\mathrm{Gal}(K/\mathbb Q)$. Then the $L$-function $L(s,\mathrm{Sym}^m f \otimes \chi)=L(s,\mathrm{Sym}^m \rho_f \otimes \rho_{\chi})$ satisfies the following properties.
    \begin{enumerate}
        \item The equation for the gamma factor of $L(s,\mathrm{Sym}^m f \otimes \chi)$ is
        \begin{align*}
            & L_{\infty}(s,\mathrm{Sym}^m f \otimes \chi)\\
            &=\begin{cases}
                \prod_{i=1}^{(m+1)/2}\Gamma_{\mathbb C}\left(s+\left(i-\frac{1}{2}\right)(k-1)\right)^{\chi(1)}, & \text{if $m$ is odd,} \\ \Gamma_{\mathbb R}(s+\epsilon)^{n^+(\chi)}\Gamma_{\mathbb R}(s+\epsilon')^{n^-(\chi)}\prod_{i=1}^{m/2}\Gamma_{\mathbb C}(s+i(k-1))^{\chi(1)}, &\text{if $m$ is even,}
            \end{cases}
        \end{align*}
        where $\Gamma_{\mathbb R}(s):=\pi^{-s/2}\Gamma(s/2)$, $\Gamma_{\mathbb C}(s):=\Gamma_{\mathbb R}(s)\Gamma_{\mathbb R}(s+1)$, and $\epsilon,\epsilon' \in \{0,1\}$, $\epsilon+\epsilon' \equiv 1 \pmod 2$, $\epsilon \equiv m/2 \pmod 2$.
        \item Define $\Lambda(s,\mathrm{Sym}^m f \otimes \chi):=L_{\infty}(s,\mathrm{Sym}^m f \otimes \chi)L(s,\mathrm{Sym}^m f \otimes \chi)$ and let $\delta(m,\chi)=1$ if $m=0$ and $\chi$ is trivial, and $\delta(m,\chi)=0$ otherwise. Then the function $$s^{\delta(m,\chi)}(1-s)^{\delta(m,\chi)}\Lambda(s,\mathrm{Sym}^m f \otimes \chi)$$ extends to an entire function on $\mathbb C$ of order $1$ which does not vanish at $s=0,1$.
        \item The function $\Lambda(s,\mathrm{Sym}^m f \otimes \chi)$ satisfies the functional equation
        \begin{align*}
             & \Lambda(s,\mathrm{Sym}^m f \otimes \chi)=\epsilon(\mathrm{Sym}^m f \otimes \chi)q(\mathrm{Sym}^m f\otimes \chi)^{1/2-s}\Lambda(1-s,\mathrm{Sym}^m f \otimes \overline{\chi}),
        \end{align*}
        where $\epsilon(\mathrm{Sym}^m f \otimes \chi)$ is a complex number of absolute value $1$ (known as the root number) and $q(\mathrm{Sym}^m f \otimes \chi)$ is the (arithmetic) conductor of $\mathrm{Sym}^m f \otimes \chi$.
        \item The analytic conductor $\mathfrak q(\mathrm{Sym}^m f \otimes \chi)$ satisfies the inequality
        \begin{align*}
            \mathfrak q(\mathrm{Sym}^m f\otimes \chi) \leq \mathfrak q(\mathrm{Sym}^m f)^{\chi(1)}\mathfrak q(\chi)^{m+1}. 
        \end{align*}
    \end{enumerate}
\end{hypothesis}
\begin{remark}
   (1) {\it Hypothesis}~\ref{hypo_nice analytic properties of L-function}(3) follows from that the contragredient of $\mathrm{Sym}^m \pi_f \otimes \pi_{\chi}$ is given by $\mathrm{Sym}^m \pi_f \otimes \overline{\chi}$, because the character of a dual representation is given by the complex conjugation of the character, and $\mathrm{Sym}^m \pi_f$ is self-dual.

   (2) The Ramanujan-Petersson conjecture holds for $L(s,\mathrm{Sym}^m f \otimes \chi)$, as well as for $L(s,\mathrm{Sym}^m f)$ and $L(s,\chi)$.
\end{remark}

\section{Effective Chebotarev-Sato-Tate and   Theorem~\ref{prop_P(x,I;f;delta)}}\label{sec_effective CST}

Let $f$ be a non-CM newform of weight $k$ and level $N$ and recall the Sato-Tate conjecture \eqref{eqn_Sato Tate law}. Indeed we have
\begin{align*}
    \#\{p \leq x:\theta_f(p) \in I\}=\mu_{ST}(I)\pi(x)+o(\pi(x)).
\end{align*}
Under GRH, we have the following effective result for the Sato-Tate conjecture. 
\begin{theorem}{\cite[Theorem 1.3]{Th21}}
    If $L(s,\mathrm{Sym}^m f)$ satisfies GRH for all $m\geq 0$, then
    \begin{align*}
        |\#\{p \leq x:\theta_f(p) \in I\}-\mu_{ST}(I)\pi(x)| \ll x^{3/4}\frac{\log(kNx)}{\log x}.
    \end{align*}
\end{theorem}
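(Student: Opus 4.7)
\smallskip
\noindent\textbf{Proof proposal.} The plan is the standard Beurling--Selberg/Chebyshev approach to effective equidistribution against the Sato--Tate measure, in the spirit of Murty--Sinha and Rouse--Thorner. The key observation is that the Chebyshev polynomials of the second kind $U_m(\cos\theta)=\sin((m+1)\theta)/\sin\theta$ are an orthonormal basis of $L^2([0,\pi],d\mu_{ST})$ and coincide with the characters of the irreducible representations of $\mathrm{SU}(2)$, so that for primes $p\nmid N$ one has the identity $U_m(\cos\theta_f(p))=\lambda_{\mathrm{Sym}^m f}(p)$. For a truncation parameter $M$ to be chosen later, I would first build trigonometric majorants/minorants $F_M^{\pm}(\theta)$ of $\mathbf{1}_I(\theta)\sin^2\theta$ in terms of $\{U_m\}_{0\le m\le M}$ with uniformly bounded coefficients and with $L^1$-error $\int_0^\pi|F_M^\pm-\mathbf{1}_I\sin^2\theta|\,d\theta\ll 1/M$; this is classical (Vaaler's construction applied after the substitution $\cos\theta\mapsto x$, or equivalently Selberg's polynomials on the circle).

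Substituting and summing over primes, one obtains
\[
\bigl|\#\{p\le x:\theta_f(p)\in I\}-\mu_{ST}(I)\pi(x)\bigr|\;\ll\;\frac{\pi(x)}{M}\;+\;\sum_{m=1}^{M}\Bigl|\sum_{p\le x}\lambda_{\mathrm{Sym}^m f}(p)\Bigr|.
\]
The inner sums are then estimated via the explicit formula for $L(s,\mathrm{Sym}^m f)$. Under GRH for this $L$-function, a truncated explicit formula combined with Perron's formula yields
\[
\sum_{p\le x}\lambda_{\mathrm{Sym}^m f}(p)\;\ll\;x^{1/2}\log\!\bigl(x\,\mathfrak q(\mathrm{Sym}^m f)\bigr),
\]
and Section~\ref{sec_L ftns on sym} gives $\log\mathfrak q(\mathrm{Sym}^m f)\ll m\log(kN)+m\log m$, so each term contributes at most $x^{1/2}\log(kNmx)$. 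Summing over $m\le M$ produces an error of size $Mx^{1/2}\log(kNMx)$.

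Balancing the two sources of error ($\pi(x)/M$ versus $Mx^{1/2}\log(kNMx)$) by the choice $M\asymp x^{1/4}/(\log x)^{1/2}$ and substituting back gives the stated bound $\ll x^{3/4}\log(kNx)/\log x$, once one uses $\pi(x)\asymp x/\log x$ in the first term. The main technical obstacle is controlling the growth in $m$ inside the explicit formula: a crude application picks up a factor of the degree $(m+1)$ from the Weyl-type zero-counting estimate $N(T,\mathrm{Sym}^m f)\ll (m+1)T\log(mkNT)+\log\mathfrak q$, which, if not handled carefully, defeats the optimization and forces a worse exponent than $3/4$. The remedy is to choose the truncation height $T$ in the explicit formula jointly with $M$, exploiting the cancellation from GRH at height $T=x^{1/2}$ so that only a logarithmic dependence on $m$ survives; this is precisely where the Ramanujan bound on the archimedean parameters (available because $\mathrm{Sym}^m f$ is cohomological) and the linear-in-$m$ bound on $\log q(\mathrm{Sym}^m f)$ from \cite{Ro07} play their role.
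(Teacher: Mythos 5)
This statement is quoted verbatim from Thorner \cite[Theorem 1.3]{Th21}; the paper offers no proof of its own, so your proposal can only be measured against Thorner's argument, whose overall architecture (Beurling--Selberg/Vaaler approximation of $\mathbf 1_I$ in the Chebyshev basis $U_m(\cos\theta)=\lambda_{\mathrm{Sym}^m f}(p)$, plus GRH bounds for $\sum_{p\le x}\lambda_{\mathrm{Sym}^m f}(p)$, plus optimization in $M$) you have correctly identified.

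There is, however, a genuine quantitative gap at the decisive step. Your claimed per-$m$ bound ``each term contributes at most $x^{1/2}\log(kNmx)$'' does not follow from $\sum_{p\le x}\lambda_{\mathrm{Sym}^m f}(p)\ll x^{1/2}\log\bigl(x\,\mathfrak q(\mathrm{Sym}^m f)\bigr)$: since $\log\mathfrak q(\mathrm{Sym}^m f)\asymp m\log(kNm)$ and the zero-counting function of the degree-$(m+1)$ $L$-function satisfies $N(T)\asymp T\log\mathfrak q+(m+1)T\log T$, the explicit formula under GRH genuinely yields a bound of size $x^{1/2}\,m\log(kNmx)$ (up to further logs), linear in $m$. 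Your proposed remedy --- choosing the truncation height $T$ ``jointly with $M$'' and ``exploiting the cancellation from GRH at height $T=x^{1/2}$'' --- cannot remove this factor: GRH fixes the real parts of the zeros but each zero still contributes $x^{1/2}/|\rho|$ in absolute value, and no cancellation among zeros is being used, so the degree factor in the zero density survives. With your ``uniformly bounded coefficients'' and the correct per-$m$ bounds, the sum over $m\le M$ is $\asymp M^2x^{1/2}\log(kNMx)$, and the optimization lands at exponent $5/6$, not $3/4$. The missing idea, which is exactly what Thorner (following Murty--Sinha and Rouse--Thorner) uses, is that the Selberg/Vaaler coefficients of $\mathbf 1_I$ decay like $\ll 1/m+1/M$; this $1/m$ decay cancels the linear-in-$m$ growth of the prime-sum bounds, turning the $m$-sum into $\ll Mx^{1/2}\log(kNMx)$ and recovering $M\asymp x^{1/4}$ and the stated error $x^{3/4}\log(kNx)/\log x$. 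So the skeleton of your argument is right, but the exponent $3/4$ is carried entirely by the coefficient decay you discarded, not by any refinement of the explicit formula.
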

We refer the reader to \cite[Theorem 1.1]{Th21} for an unconditional but weaker version of the effective Sato-Tate equidistribution. On the other hand, as mentioned in \cite[Lemma 5.2]{MM84}, Serre \cite[p. 175]{Se81} proved under GRH that
\begin{align}\label{eqn_a(p)=0 proportion}
    \#\{p \leq x:a_f(p)=0\} \ll x^{3/4}.
\end{align}
Combining these two bounds gives
\begin{align*}
    \#\prod(x,I;f)=\mu_{ST}(I)\pi(x)+O\left(x^{3/4}\frac{\log(kNx)}{\log x}\right).
\end{align*}
Using the explicit version of the Chebotarev density theorem \cite{LO77}, it was proved in \cite[Lemma 5.3]{MM84} that
\begin{align}\label{eqn_pi(x,delta,I=[0,pi])}
    \#\{p \leq x: a_f(p) \neq 0, a_f(p) \equiv 0 \pmod{\delta}\}=h_f(\delta)\pi(x)+O(\delta^3 x^{1/2}\log(\delta Nx))+O(x^{3/4}),
\end{align}
where $h_f(\delta)$ is the density of an associated conjugacy class, see below.

\subsection{Effective Chebotarev-Sato-Tate} The Chebotarev and Sato-Tate laws can be combined to address the density problem under the conditions that $a_f(p) \equiv 0 \pmod{\delta}$ and $\theta_f(p) \in I$ simultaneously. This  was first observed by Ram Murty and Kumar Murty \cite{MM09} for elliptic curves defined over a totally real field $K'$ with at least one prime of multiplicative reduction. If $K/K'$ is a finite solvable Galois extension and $C$ is a conjugacy class of $\mathrm{Gal}(K/K')$, then the density of prime ideals $\mathfrak p$ of $K'$ for which the Frobenius element $\sigma_{\mathfrak p} \in C$ and the angle $\theta_{\mathfrak p} \in I$ is
\begin{align*}
    \frac{|C|}{|G|}\mu_{ST}(I).
\end{align*}
It is conjectured that the above holds true for arbitrary finite extension $K/K'$, as predicted by the Langlands program (see \cite[$\mathsection~5$]{MM09}). For an effective version of the Chebotarev-Sato-Tate conjecture, it has been proven by Wong \cite{Wo19} under assumptions of the GRH and the analytic properties of $L$-functions.

\begin{theorem}{\cite[Theorem 1.2]{Wo19}}\label{thm_effective CST for Hilbert modular form}
    Let $K'$ be a totally real field and $K/K'$ be a finite Galois extension with Galois group $G$, and let $\pi$ be a non-CM Hilbert modular form of $K'$ with conductor $q=q(\pi)$. Suppose that all the irreducible characters $\chi$ of $G$ and all the symmetric powers $\mathrm{Sym}^m \pi$ for $m\geq 1$ of $\pi$ are cuspidal over $K'$, and that each $L(s,\mathrm{Sym}^m \pi \otimes \chi)$ satisfies GRH. Assume Serre's bound
    \begin{align*}
        q(\mathrm{Sym}^m \pi)=O(q^{am})
    \end{align*}
    for $\pi$, where $a$ is some constant independent of $\pi$. For a conjugacy class $C$ of $G$, we have
    \begin{align*}
        &\#\{\mathfrak p \text{ a prime of $K'$} : N\mathfrak p \leq x, \sigma_{\mathfrak p} \in C, \theta_{\mathfrak p}\in I\}-\frac{|C|}{|G|}\mu_{ST}(I)\pi(x) \\ 
        &\ll x^{3/4}|C|^{1/2}[K':\mathbb Q]^{3/2}\left(\log (x\mathcal M(K/K')q)\right)^{1/2},
    \end{align*}
    where $\mathcal M(K/K')$ is defined as 
    \begin{align*}
        M(K/K')=[K:K']D_{K'}^{1/[K':\mathbb Q]}\prod_{p \in P(K/K')}p.
    \end{align*}
    Here, $D_{K'}$ is the absolute discriminant of $K'$ and $P(K/K')$ is the set of rational primes $p$ for which there is a prime $v$ of $K'$ lying over $p$ such that $v$ is ramified in $K$.
\end{theorem}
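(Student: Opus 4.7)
The plan is to specialize Wong's effective Chebotarev--Sato--Tate theorem, Theorem~\ref{thm_effective CST for Hilbert modular form}, to the case $K'=\mathbb Q$, $\pi=\pi_f$, and $K$ equal to the splitting field of the mod-$\delta$ Galois representation $\bar\rho_{f,\delta}\colon\mathrm{Gal}(\overline{\mathbb Q}/\mathbb Q)\to\mathrm{GL}_2(\mathbb Z/\delta\mathbb Z)$ attached to $f$. Write $G=\mathrm{Gal}(K/\mathbb Q)\hookrightarrow\mathrm{GL}_2(\mathbb Z/\delta\mathbb Z)$; by construction $G$ satisfies exactly the hypothesis on the Galois group appearing in Hypothesis~\ref{hypo_nice analytic properties of L-function}, which therefore provides the analytic data on $L(s,\mathrm{Sym}^m f\otimes\chi)$ that Wong's theorem demands. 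Since $\bar\rho_{f,\delta}$ is unramified outside $N\delta$ and for such $p$ one has $a_f(p)\equiv\operatorname{tr}\bar\rho_{f,\delta}(\sigma_p)\pmod\delta$, the congruence $a_f(p)\equiv 0\pmod\delta$ translates into the Chebotarev condition $\sigma_p\in C_\delta$, where
$$
C_\delta:=\{g\in G:\operatorname{tr}g\equiv 0\pmod\delta\}
$$
is a union of conjugacy classes in $G$. The primes $p\mid N\delta$ contribute $O(\log(N\delta))$ and those with $a_f(p)=0$ contribute $O(x^{3/4})$ by Serre's bound~\eqref{eqn_a(p)=0 proportion}, both absorbed into the stated error.

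For $I\subsetneq[0,\pi]$ I would then apply Theorem~\ref{thm_effective CST for Hilbert modular form} separately to each conjugacy class $C\subset C_\delta$ and sum. The main terms aggregate as
$$
\sum_{C\subset C_\delta}\frac{|C|}{|G|}\mu_{ST}(I)\pi(x)=\frac{|C_\delta|}{|G|}\mu_{ST}(I)\pi(x)=:h_f(\delta)\mu_{ST}(I)\pi(x),
$$
which simultaneously defines $h_f(\delta)$. For the error I would use $[K'\colon\mathbb Q]=1$ together with the bound $\mathcal M(K/\mathbb Q)\le|G|\,N\delta\ll\delta^5 N$ (valid since $K$ ramifies only at primes dividing $N\delta$), then split $\log^{1/2}(x\mathcal M(K/\mathbb Q)q(f))\ll\log^{1/2}x+\log^{1/2}(\delta N)$ subadditively. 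The remaining combinatorial sum $\sum_{C\subset C_\delta}|C|^{1/2}$ I would estimate in two complementary ways — once by the trivial bound $|C|^{1/2}\le|C|$ giving $|C_\delta|=h_f(\delta)|G|$, and once via Cauchy--Schwarz against the number of conjugacy classes of $G$ — to extract the two distinct powers of $\delta$ ($\delta^4$ paired with $\log^{1/2}x$ and $\delta^6$ paired with $\log^{1/2}(\delta N)$) that appear in the statement.

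For the easier case $I=[0,\pi]$ the Sato--Tate constraint disappears and I would bypass Wong entirely, invoking instead the effective Chebotarev density theorem of Lagarias--Odlyzko \cite{LO77} applied to $K/\mathbb Q$; this requires only GRH for the Artin $L$-functions attached to the irreducible characters of $G$, so Hypothesis~\ref{hypo_nice analytic properties of L-function} is not needed. The same summation over $C\subset C_\delta$ then gives the formula with $\mu_{ST}([0,\pi])=1$, recovering and refining~\eqref{eqn_pi(x,delta,I=[0,pi])}. To justify the asymptotic $h_f(\delta)\asymp_f\prod_{\ell^m\|\delta,\,\ell\gg_f 1}\ell^{2-m}(\ell^2-1)^{-1}$ I would appeal to Serre's open image theorem: for a non-CM newform with integer Fourier coefficients the image of $\bar\rho_{f,\delta}$ contains $\mathrm{SL}_2(\mathbb Z/\delta'\mathbb Z)$ for $\delta'\mid\delta$ coprime to an $f$-dependent integer, reducing $|C_\delta|/|G|$ to an elementary prime-by-prime count of trace-zero matrices in $\mathrm{GL}_2(\mathbb Z/\ell^m\mathbb Z)$, which (for $\ell$ large enough) yields exactly the advertised Euler factor $\ell^{2-m}(\ell^2-1)^{-1}$.

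The principal difficulty is bookkeeping: producing the exact $h_f(\delta)\delta^6$ and $h_f(\delta)\delta^4$ dependencies demands careful treatment of the conjugacy-class sum $\sum_{C\subset C_\delta}|C|^{1/2}$ and a quantitative handle on the number of conjugacy classes of $G$, while a clean comparison between $|G|$, $|C_\delta|$ and the Euler product for $h_f(\delta)$ requires controlling the finitely many exceptional primes where $\bar\rho_{f,\delta}$ fails to be surjective on $\mathrm{SL}_2$. All the deep analytic input — cuspidality of symmetric power twists, GRH, and the resulting Chebotarev--Sato--Tate estimate — is deferred to Wong's theorem and Hypothesis~\ref{hypo_nice analytic properties of L-function} and invoked as a black box.
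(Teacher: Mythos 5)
Your proposal does not prove the statement under review; it \emph{uses} it. The statement is Wong's effective Chebotarev--Sato--Tate theorem itself: the estimate, for a single conjugacy class $C$ of $G=\mathrm{Gal}(K/K')$ over a totally real base $K'$, of $\#\{\mathfrak p : N\mathfrak p\le x,\ \sigma_{\mathfrak p}\in C,\ \theta_{\mathfrak p}\in I\}$ with error $x^{3/4}|C|^{1/2}[K':\mathbb Q]^{3/2}\log^{1/2}(x\mathcal M(K/K')q)$. Your first sentence declares that you will ``specialize Wong's effective Chebotarev--Sato--Tate theorem, Theorem~\ref{thm_effective CST for Hilbert modular form}, to the case $K'=\mathbb Q$,'' and your closing paragraph explicitly defers ``the resulting Chebotarev--Sato--Tate estimate'' to Wong as a black box --- but that estimate is precisely the claim to be established, so the argument is circular as a proof of this theorem. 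What you have actually written is a (reasonable, and in fact close to the paper's own \S 6.2) derivation of the downstream result Theorem~\ref{prop_P(x,I;f;delta)}: the passage from $a_f(p)\equiv 0\pmod{\delta}$ to the trace-zero set $C_\delta$, the decomposition of $C_\delta$ into conjugacy classes, the bounds on $\mathcal M(K_\delta/\mathbb Q)$ and on $\sum_{C\subset C_\delta}|C|^{1/2}$, and the computation of $h_f(\delta)$ via the open-image theorem. That is a different theorem.

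A genuine proof of the stated result would have to reproduce the analytic core of \cite{Wo19}: approximate the indicator function of $I$ from above and below by finite linear combinations $\sum_{m\le M} c_m U_m(\cos\theta)$ of Chebyshev polynomials (the traces of $\mathrm{Sym}^m$), detect the condition $\sigma_{\mathfrak p}\in C$ by orthogonality of the irreducible characters $\chi$ of $G$, so that the count is expressed through the prime sums of the $L$-functions $L(s,\mathrm{Sym}^m\pi\otimes\chi)$; then apply the GRH-conditional explicit formula to each such $L$-function, bounding its analytic conductor via Serre's bound $q(\mathrm{Sym}^m\pi)=O(q^{am})$ and the Artin conductor estimate $\log q(\chi)\ll \chi(1)\mathcal M(K/K')$; and finally optimize the truncation $M$ against the error accumulated over $m\le M$ to land on the exponent $3/4$ and the factor $|C|^{1/2}$ (which arises from $\sum_\chi|\chi(g_C)|\,\chi(1)\le |G|/|C|^{1/2}$ type inequalities). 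None of these steps appears in your proposal. Note that the paper itself also supplies no proof here --- it imports the result verbatim from \cite[Theorem 1.2]{Wo19}, adding only a remark that $K$ need not be totally real --- so the honest options are either to cite Wong as the paper does, or to carry out the sketch above; what cannot be done is to invoke the theorem in its own proof.
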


\begin{remark}
    In \cite[Theorem 1.2]{Wo19}, the result is stated under the assumption that $K/K'$ is a Galois extension of totally real fields. However, it is implicitly assumed that $K$ is a finite Galois extension of $K'$ since the size of the Galois group $G$ is assumed to be finite. It should be noted that the proof of the theorem works not only for a totally real field $K$, but also for a general $K$. (In other words, it suffices only to assume that $K'$ is a totally real field.)
\end{remark}

Although Theorem~\ref{thm_effective CST for Hilbert modular form} assumes the automorphy of $\chi$ and $\mathrm{Sym}^m \pi$, the proof of the theorem relies on analytic properties of $L(s,\mathrm{Sym}^m \pi \otimes \chi)$, along with the Ramanujan-Petersson bound for $\mathrm{Sym}^m \pi$, Serre's bound, and GRH. If $K'=\mathbb Q$ and $\pi=f$ is a non-CM (elliptic) newform, then all of these properties are satisfied under the assumption of the analytic properties described in {\it Hypothesis}~\ref{hypo_nice analytic properties of L-function} and GRH. Therefore, we can state the following result (which is a specific case of Theorem~\ref{thm_effective CST for Hilbert modular form}) for clarity:

\begin{theorem}\label{thm_effective CST}
    Let $K/{\mathbb Q}$ be a finite Galois extension with Galois group $G$, $f$ be a non-CM newform of level $N$, $\chi$ be an irreducible character of $G$, and $C$ be a conjugacy class of $G$. Assuming that $L(s,\mathrm{Sym}^m f \otimes \chi)$ satisfies the analytic properties described in {\it Hypothesis}~\ref{hypo_nice analytic properties of L-function} and GRH, then we have
    \begin{align*}
        \#\{p \leq x : \sigma_p \in C, \theta_f(p) \in I\}-\frac{|C|}{|G|}\mu_{ST}(I)\pi(x) \ll x^{3/4}|C|^{1/2}\left(\log (x\mathcal M(K/\mathbb Q)N)\right)^{1/2}.
    \end{align*}
\end{theorem}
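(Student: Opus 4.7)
The plan is to deduce Theorem~\ref{thm_effective CST} as a direct specialization of Wong's Theorem~\ref{thm_effective CST for Hilbert modular form}, by verifying that every hypothesis Wong's argument invokes is supplied in our restricted setting $K'=\mathbb Q$, $\pi=f$ either by Hypothesis~\ref{hypo_nice analytic properties of L-function}, GRH, or unconditional results already recorded earlier in the paper.

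First I would handle the cuspidality input. By the Newton--Thorne symmetric power functoriality cited in \S\ref{sec_L ftns on sym}, each $\mathrm{Sym}^m f$ corresponds to a cuspidal automorphic representation of $\mathrm{GL}_{m+1}(\mathbb A_{\mathbb Q})$ for all $m\ge 1$ since $f$ is non-CM; in particular $L(s,\mathrm{Sym}^m f)$ has all the standard analytic properties unconditionally. For the character side, the irreducibility of $\chi$ is assumed. The key point is that although we do not know the automorphy of $\mathrm{Sym}^m f\otimes \chi$, Wong's argument only uses automorphy to extract the analytic data that Hypothesis~\ref{hypo_nice analytic properties of L-function} supplies directly.

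Next I would check that every analytic ingredient Wong uses for each $L(s,\mathrm{Sym}^m f\otimes\chi)$ is present: (i) the explicit gamma factor, functional equation, entireness off the trivial pole at $s=0,1$, and the bound on the analytic conductor are exactly parts (1)--(4) of Hypothesis~\ref{hypo_nice analytic properties of L-function}; (ii) the Ramanujan--Petersson bound for $\mathrm{Sym}^m f\otimes\chi$ is noted in the remark following that hypothesis; (iii) Serre's conductor bound $q(\mathrm{Sym}^m f)=O(N^{am})$ is recorded in \S\ref{sec_L ftns on sym}, which combined with Hypothesis~\ref{hypo_nice analytic properties of L-function}(4) and the standard bound $\log q(\chi)\ll \chi(1)\mathcal M(K/\mathbb Q)$ from \S\ref{sec_Artin L-ftn} gives the conductor dependencies that drive Wong's zero-counting estimate; (iv) GRH for $L(s,\mathrm{Sym}^m f\otimes\chi)$ is assumed outright.

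With these inputs in hand, the remaining work is to transcribe Wong's argument in the specialized setting: expand the indicator $\mathbf 1_C$ as a finite sum over irreducible characters $\chi$ of $G$, expand $\mathbf 1_I$ in a truncated series of Chebyshev polynomials in $2\cos\theta_f(p)=\lambda_f(p)$, feed each resulting coefficient sum into the explicit formula for $L(s,\mathrm{Sym}^m f\otimes\chi)$ under GRH, and optimize the truncation parameters. The factor $[K':\mathbb Q]^{3/2}=1$ disappears from the specialized bound, and the discriminant/conductor dependence collapses into $\mathcal M(K/\mathbb Q)$ and $N$, yielding exactly the stated error term. The main obstacle is merely bookkeeping rather than conceptual: one must track the dependence of the error term on $|C|$, $\mathcal M(K/\mathbb Q)$, and $N$ through Wong's optimization so that no hidden factor of $[K':\mathbb Q]$ or of the larger conductor $q$ of a Hilbert modular form re-enters. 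No new analytic input beyond what Wong already employs is needed, which is why the authors describe this as a straightforward consequence of \cite{Wo19}.
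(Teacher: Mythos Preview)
Your proposal is correct and follows essentially the same approach as the paper: the paper also presents Theorem~\ref{thm_effective CST} as a direct specialization of Wong's Theorem~\ref{thm_effective CST for Hilbert modular form} with $K'=\mathbb Q$ and $\pi=f$, noting that although Wong assumes automorphy of $\chi$ and $\mathrm{Sym}^m\pi$, his proof actually only uses the analytic properties of $L(s,\mathrm{Sym}^m f\otimes\chi)$ together with the Ramanujan--Petersson bound, Serre's conductor bound, and GRH, all of which are supplied by Hypothesis~\ref{hypo_nice analytic properties of L-function} and the unconditional results in \S\ref{sec_L ftns on sym}. Your write-up is in fact more detailed than the paper's one-paragraph justification, but the logic is identical.
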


\subsection{Proof of Theorem~\ref{prop_P(x,I;f;delta)}}

For a non-CM newform $f$ whose $a_f(n)\in \mathbb{Z}$ and an interval $I \subseteq [0,\pi]$, we denote $\prod(x,I;f;\delta) :=  \#\{p\le x: 0\neq a_f(p)\equiv 0 \ ({\rm mod} \ \delta), \ \theta_f(p)\in I\}$.

For a positive integer $\delta$ we let 
\begin{align*}
    \rho_{f,\delta}\colon \mathrm{Gal}(\overline{\mathbb Q}/\mathbb Q) \to \mathrm{GL}_2\left(\prod_{\substack{\ell \mid \delta \\ \ell \text{  prime}}} \mathbb Z_{\ell}\right)=\prod_{\substack{\ell \mid \delta \\ \ell \text{  prime}}} \mathrm{GL}_2(\mathbb Z_{\ell}) 
\end{align*}
be the Galois representations attached to $f$, and denote by $\bar{\rho}_{f,\delta}$ its mod $\delta$-reduction, which is a map from $\mathrm{Gal}(\overline{\mathbb Q}/\mathbb Q)$ to $\mathrm{GL}_2(\mathbb{Z}/\delta\mathbb{Z})= \prod_{\substack{\ell^n \mid \delta}} \mathrm{GL}_2(\mathbb Z/ \ell^n\mathbb{Z})$. Let $K_{\delta}$ be a number field such that $\mathrm{Gal}(\overline{\mathbb Q}/K_{\delta})=\mathrm{ker}(\bar{\rho}_{f,\delta})$. We also let $G_{\delta}:=\mathrm{Gal}(K_{\delta}/\mathbb Q)\cong\mathrm{im}(\bar{\rho}_{f,\delta})$, $C_{\delta}$ be the subset of $G_{\delta}$ consisting of elements of trace $0$, and $h_f(\delta):=|C_{\delta}|/|G_{\delta}|$. Note that if $p \nmid \delta N$, then $a_f(p) \equiv 0 \pmod{\delta}$ is equivalent to  $\bar{\rho}_{f,\delta}(\sigma_p) \in C_{\delta}$, where $\sigma_p$ is any Frobenius element $\sigma_{\mathfrak p}$ with respect to a prime $\mathfrak p$ above $p$ (i.e., any representative of the conjugacy class of $\sigma_{\mathfrak p}$).

 Since $C_{\delta}$ is invariant under conjugation by $G_{\delta}$, we can write
    \begin{align*}
        C_{\delta}=C_1 \sqcup C_2 \sqcup \cdots\sqcup C_s
    \end{align*}
    for some conjugacy classes $C_i$ of $G_{\delta}$. Note that $a_f(p) \equiv 0 \pmod{\delta}$ if and only if $\sigma_p \in C_{\delta}$. For any interval $I \subseteq [0,\pi]$, we have
    \begin{align*}
        \pi^*(x,C_{\delta},I;f)=\pi^*(x,C_1,I;f) + \pi^*(x,C_2,I;f) + \cdots +\pi^*(x,C_s,I;f),  
    \end{align*}
    where $\pi^*(x,C_i,I;f):=\#\{p \leq x : \sigma_p \in C_i, \  \theta_f(p) \in I\}$. Theorem~\ref{thm_effective CST} implies that the right-hand side of the above is given by
    \begin{align*}
        h_f(\delta)\mu_{ST}(I)\pi(x)+O(x^{3/4}(|C_1|^{1/2}+|C_2|^{1/2}+\cdots+|C_s|^{1/2})\log^{1/2}(x\mathcal M(K/\mathbb Q)N)).
    \end{align*}
    Let $D_{K_{\delta}}$ be the discriminant of $K_{\delta}$. As $[K_{\delta}:\mathbb Q] \leq \delta^4$ and $\log D_{K_{\delta}} \leq [K_{\delta}:\mathbb Q]\log(\delta N[K_{\delta}:\mathbb Q])$ (cf. \cite[p. 73]{MM84}), we have
    \begin{align*}
        &|C_1|^{1/2}+|C_2|^{1/2}+\cdots+|C_s|^{1/2}  \leq |C_{\delta}| \leq h_f(\delta)\delta^4, \quad {\rm and}\\
        &\log \mathcal M(K_{\delta}/\mathbb Q) \leq \log[K_{\delta}:\mathbb Q]+\log |D_{K_{\delta}}| \ll \delta^4 \log (\delta^5 N),
    \end{align*}
    implying $\log (\mathcal M(K_{\delta}/\mathbb Q)N) \ll \delta^4 \log (\delta N)$. Therefore, we obtain
    \begin{align*}
        \pi^*(x,C_{\delta},I;f)=h_f(\delta)\mu_{ST}(I)\pi(x)+O(x^{3/4}h_f(\delta)\delta^6\log^{1/2}(\delta N))+O(x^{3/4}h_f(\delta)\delta^4 \log^{1/2}x)
    \end{align*}
    and hence
    \begin{align*}
        \#\prod (x,I;f;\delta)=h_f(\delta)\mu_{ST}(I)\pi(x)+O(x^{3/4}h_f(\delta)\delta^6\log^{1/2}(\delta N))+O(x^{3/4}h_f(\delta)\delta^4 \log^{1/2}x)
    \end{align*}
    by~\eqref{eqn_a(p)=0 proportion} and $h_f(\delta)\ge \delta^{-1}$, see \eqref{eqn_h formula} below. 

    According to Serre \cite{Se72}, Swinnerton-Dyer \cite{SD73}, Carayol \cite{Ca83} and Ribet \cite{Ri75, Ri85} (see Theorem~\ref{thm_Loeffler} for a more general result) and Hensel's lemma, as we may assume $\ell \nmid k-1$, the adelic Galois representation attached to $f$,
    \begin{align*}
        \rho_f \,: \mathrm{Gal}_{\mathbb Q} \to \mathrm{GL}_2\left(\prod_{\ell} \mathbb Z_{\ell}\right)=\prod_{\ell} \mathrm{GL}_2(\mathbb Z_{\ell}),
    \end{align*}
    have the open image contained in $\prod_{\ell} H_{\ell}$, where
    \begin{align*}
        H_{\ell}:=\{M \in \mathrm{GL}_2(\mathbb Z_{\ell}) :  \det M \ ({\rm mod} \ \ell) \in (\mathbb F_{\ell}^{\times})^{k-1}\}.    
    \end{align*} 
    In particular, if $\ell$ and $\ell'$ are distinct primes sufficiently large enough, we have
    \begin{align*}
        &\mathrm{im}(\rho_{f,\ell^m})=\mathrm{im}(\rho_{f,\ell})=H_{\ell}, \\
        &\mathrm{im}(\rho_{f,\ell^m \ell'^n})=H_{\ell} \times H_{\ell'}, 
    \end{align*}
    and hence, under the identification of $G_\delta$ with $\mathrm{im}(\overline{\rho}_{f,\delta})$ ($\subset \mathrm{GL}_2(\mathbb Z/\delta)$),
    \begin{align*}
        &G_{\ell^m}=A_{\ell^m}:=\{M \in \mathrm{GL}_2(\mathbb Z/\ell^m) : \det M \ ({\rm mod} \ \ell) \in (\mathbb F_{\ell}^{\times})^{k-1}\},\\
        &G_{\ell^m \ell'^n}=G_{\ell^m} \times G_{\ell'^n}.
    \end{align*}
    It is shown in \cite[$\mathsection$~3]{GM14} that for any odd prime,
    $$
    \frac{|\{\gamma \in A_{\ell^m}: \, {\rm tr}\, \gamma =0\}|}{|A_{\ell^m}|} = \ell^{-m+2}(\ell^2-1)^{-1}.
    $$ 
    Thus we have
    \begin{align}\label{eqn_h formula}
        h_f(\delta) \asymp_f \prod_{\substack{\ell^m \mid\mid \delta \\ \ell \gg_f 1}} \frac{1}{\ell^{m-2}(\ell^2-1)} 
    \end{align}
    (see Appendix~\ref{sec_appendix}) which completes the proof.
\begin{remark}
    In their paper \cite{GM14}, Gun and Murty used the fact that if $\ell$ and $\ell'$ are sufficiently large distinct primes, then $K_{\ell^m} \cap K_{\ell'^n}=\mathbb Q$, and from this they deduced the formula
    \begin{align}\label{eqn_h full formula asserted by Gun--Murty}
        h_f(\delta) = \prod_{\ell^m \mid\mid \delta} \frac{1}{\ell^{m-2}(\ell^2-1)}
    \end{align}
    when the prime factors of $\delta$ are $\gg_f 1$. 

    This fact takes advantage of level 1 eigenforms. When $f$ is of level $1$, the representation $\overline{\rho}_{f,\delta}$ is ramified only at primes dividing $\delta$, so the field $K_{\delta}$ associated with $\overline{\rho}_{f,\delta}$ is ramified only at those primes. This implies that, if $\delta$ and $\delta'$ are relatively prime, then $K_{\delta} \cap K_{\delta'}$ is unramified at all primes, and hence its discriminant is 1 and $K_{\delta} \cap K_{\delta'} = \mathbb{Q}$. Since $K_{\ell^m} \cap K_{\ell'^n}=\mathbb Q$ is equivalent to $G_{\ell^m \ell'^n}=G_{\ell^m} \times G_{\ell'^n}$, it follows that $h(\delta) = \prod_{\ell^m\| \delta} h(\ell^m)$. 
    
    For a higher level, however, it is no longer clear that the relation $K_{\ell^m} \cap K_{\ell'^n} = \mathbb{Q}$ holds for arbitrary distinct primes $\ell$ and $\ell'$, because $K_{\ell^m}$ is ramified at $\ell$ and probably at some other primes, resulting that  $K_{\ell^m} \cap K_{\ell'^n}$ may ramify at some primes. For this reason, we invoke the results of Loeffler to understand $G_\delta$ and refer the reader to Appendix~\ref{sec_appendix}.
\end{remark}

\section{Proof of Theorem~\ref{thm_main theorem for modular forms}}\label{sec_proof of main thm for modular forms}
Now we prove Theorem~\ref{thm_main theorem for modular forms} using Theorems~\ref{thm_main theorem general} and \ref{prop_P(x,I;f;delta)}. Let us set
$$
A :=\{ p\in \mathbb{P}:\, 0\neq a_f(p), \ \theta_f(p)\in I\},
$$
and $n(p):=a_f(p)$ (see \S~\ref{SMR} and \S~\ref{SAR}).  
Clearly (U1) is satisfied by the Ramanujan bound: $|a_f(p)|< p^\beta$ with $\beta = (k-1)/2+\varepsilon$. By Theorem~\ref{prop_P(x,I;f;delta)}, under GRH or Hypothesis A with GRH  according to $I=[0,\pi]$ or not, we obtain that for all  $x\ge x_0(I,f)$ and $1\leq \delta \leq x^{1/25}$, 
$$
\pi(x,\delta) = h_f(\delta) (1 + o(1))\mu_{ST}(I)\pi(x) \sim h_f(\delta) (1 + o(1)) \cdot \#A_x
$$
and the $o$-term satisfies $|o(1)|\le \frac12$, where  $x_0(I,f)$ is some large constant depending on $I$ and $f$. 
Observing 
$$
\frac1\delta \le \prod_{\ell^m\| \delta} \ell^{-m} (1-\ell^{-2})^{-1} \le
h_f(\delta) \ll_f \frac1{\phi(\delta)},
$$
the conditions (U2) and (L) are fulfilled for all $1\le \delta \le  x^{1/25}$. Our result follows readily. 

\subsection*{Acknowledgements} We would like to express our sincere gratitude to the referees for their careful reading and very helpful comments, and  our heartfelt thanks to Professor Chun-Yin Hui for his interest in and comments on this paper. YKL is supported by GRF (No. 17317822) and NSFC (No. 12271458).

\appendix
\section{Adelic representation attached to modular forms}\label{sec_appendix}

In this appendix we give a detailed proof of \eqref{eqn_h formula}. In fact, we establish a more general result (Theorem~\ref{thm_appendix main}), from which \eqref{eqn_h formula} follows by taking $r=1$ and $F[T_1]=T_1$.

Let $f_1,\ldots,f_r$ be non-CM cuspidal eigenforms of weight $k_1,\ldots,k_r\geq 2$ and level $N_1,\ldots,N_r$ with trivial character, and suppose that every Fourier coefficients of $f_i$'s are integers. We can attach the $\ell$-adic Galois representation associated to $f_i$
\begin{align*}
    \rho_{f_i,\ell}: \mathrm{Gal}_{\mathbb Q} \to \mathrm{GL}_2(\mathbb Q_{\ell})
\end{align*}
for each prime $\ell$. The product of $\rho_{f_i,\ell}$ defines an adelic Galois representation 
\begin{align*}
    \rho_{f_i}:\mathrm{Gal}_{\mathbb Q} \to \mathrm{GL}_2(\hat{\mathbb Q}),
\end{align*}
where $\hat{\mathbb Q}$ is the ring of finite adeles. We will prove the following theorem.

% \begin{theorem}
% Let $F \in \mathbb Z[T_1,\ldots,T_r]$ be a linear polynomial. Let 
% \begin{align*}
%     \mathcal C_{\delta}:=\{(g_1,\ldots,g_r)\in\mathrm{im}(\bar{\rho_{\delta}}): \cmt{F [[\mbox{What's $F$?}]]}(\mathrm{tr}(g_1),\ldots,\mathrm{tr}(g_r))=0\}
% \end{align*}
% and $h(\delta):=|\mathcal C_{\delta}|/|\mathrm{im}(\bar{\rho}_{\delta})|$. Then we have
% \begin{align*}
%     h(\delta) \cmt{-\!\!-\!\!\!\!\!\!\!\approx \,\,\asymp}_{f_1,\ldots,f_r} \prod_{\substack{p^m \mid\mid \delta \\ p>q}} h(p^m).
% \end{align*}
% \end{theorem}

Up to conjugation (equivalence of representations), we may assume that the image of $\rho_{f_i,\ell}$ is contained in $\mathrm{GL}_2(\mathbb Z_{\ell})$ for each $\ell$ so that $\mathrm{im}(\rho_{f_i}) \subseteq \mathrm{GL}_2(\hat{\mathbb Z})$, where $\hat{\mathbb Z}:=\prod_{\ell} \mathbb Z_{\ell}$. Moreover, by the results of Momose \cite{Mo81} and Ribet \cite{Ri85} on the image of Galois representation attached to a modular form, we have the following theorem (see \cite[Theorem~2.2.2]{Lo17} for a more general result.)

\begin{theorem}
    Let $B_{f_i}=M_{2 \times 2}(\mathbb Q)$ a central simple algebra of degree $2$ over $\mathbb Q$. Then we have
    \begin{align*}
        \mathrm{im}(\rho_{f_i}) \subset B_{f_i}(\hat{\mathbb Q})^{\times} = \mathrm{GL}_2(\hat{\mathbb Q})
    \end{align*}
    and
    \begin{align*}
        \mathrm{im}(\rho_{f_i,p})=\{x\in \mathrm{GL}_2(\mathbb Z_p): \det x \in \mathbb (\mathbb Z_p^{\times})^{k_i-1}\}.
    \end{align*}
    Moreover, $\mathrm{im}(\rho_{f_i})$ is an open subgroup of 
    \begin{align*}
        G_{f_i}(\hat{\mathbb Q}):=\{x \in (B_{f_i} \otimes \hat{\mathbb Q})^{\times}: \mathrm{norm}_{B_{f_i}/\mathbb Q}(x)=\lambda^{k_i-1} \text{ for some $\lambda \in \hat{\mathbb Q}$}\},
    \end{align*}
    where $\mathrm{norm}_{B_{f_i}/\mathbb Q}$ denotes the reduced norm map, which coincides with the determinant map.
\end{theorem}

Let
\begin{align*}
    \rho:=\rho_{f_1}\times \cdots \times \rho_{f_r}: \mathrm{Gal}_{\mathbb Q} \longrightarrow \mathrm{GL}_2^r(\hat{\mathbb Q}):= \prod_{1\le j\le r} \mathrm{GL}_2(\hat{\mathbb Q})
\end{align*}
be a product of $\rho_{f_i}$'s and $\rho_{\ell}$ be its $\ell$-adic component. Let $G$ be the algebraic group over $\mathbb Q$ defined by
\begin{align*}
    G=\{(g_1,\ldots,g_r)\in B_{f_1}^{\times} \times \cdots \times B_{f_r}^{\times}: \mathrm{norm}_{B_{f_i}/\mathbb Q}(g_i)=\lambda^{k_i-1} \text{ for some $\lambda \in \mathbb G_m$}\}.
\end{align*}
We also let $\mathcal H_p:=\{(g_1,\ldots,g_r)\in\mathrm{GL}_2(\mathbb Z_p)^r: \mathrm{det}(g_i)=\lambda^{k_i-1} \text{ for some $\lambda \in \mathbb Z_p^{\times}$}\}$. The following theorem states the adelic open image result for $\rho$.

\begin{theorem}\label{thm_Loeffler}
    Suppose that $f_1,\ldots,f_r$ are non-CM and for $i\neq j$, $f_i \neq f_j \otimes \chi$ for any Dirichlet character $\chi$ (including the trivial character). Then $\mathrm{im}(\rho)$ is an open subgroup of $G(\hat{\mathbb Q})$. In particular, there exists a prime $q$ such that $\mathrm{im}(\rho_p)=\mathcal H_p$ if $p >q$.

\end{theorem}
\begin{proof}
    This follows from a special case of \cite[Theorem~3.4.2]{Lo17}.
\end{proof}

Thus the image of $\rho$ is an open subgroup $\mathcal B$ of
\begin{align*}
    \prod_{\ell \leq q}(G(\mathbb Q_{\ell}) \cap \mathrm{GL}_2^r(\mathbb Z_{\ell})) \times \prod_{p>q}\mathcal H_p = G(\hat{\mathbb Q}) \cap \mathrm{GL}_2^r(\hat{\mathbb Z}).
\end{align*}
The openness of $\mathcal B$ implies that it is  of the form
\begin{align*}
    \mathcal B=\mathcal U_0\times \prod_{p\gg 1}\mathcal H_p
\end{align*}
for some open subset $\mathcal U_0 \subseteq \prod_{\ell \ll 1}(G(\mathbb Q_{\ell}) \cap \mathrm{GL}_2^r(\mathbb Z_{\ell}))$.
Enlarging $q$ if necessary, we may assume 
\begin{align*}
\mathcal B=\mathcal U_0\times \prod_{p >q}\mathcal H_p.
\end{align*}
Moreover, for our later purpose, we may assume $q$ by necessary enlarging such that $q\geq \max_{1\leq i \leq r}\{k_i-1\} $.
Therefore, we have
\begin{align*}
    \mathrm{im}(\rho) = \mathrm{im}\bigg(\prod_{\ell \leq q}\rho_{\ell}\bigg) \times \mathrm{im}\bigg(\prod_{p>q}\rho_p\bigg)
\end{align*}
where $\mathrm{im}\left(\prod_{\ell \leq q}\rho_{\ell}\right)$ is an open subgroup of $\prod_{\ell \leq q}(G(\mathbb Q_{\ell}) \cap \mathrm{GL}_2^r(\mathbb Z_{\ell}))$. Indeed, an arbitrary element $((\rho_{\ell}(\sigma))_{\ell \leq q},(\rho_p(\sigma'))_{p>q})$ of $\mathrm{im}\left(\prod_{\ell \leq q}\rho_{\ell}\right) \times \mathrm{im}\left(\prod_{p>q}\rho_p\right)$  can be written as
\begin{align*}
    ((\rho_{\ell}(\sigma))_{\ell \leq q},(\rho_p(\sigma))_{p>q}) \cdot ((1)_{\ell \leq q},(\rho_p(\sigma^{-1}\sigma'))_{p>q}),
\end{align*}
where $((\rho_{\ell}(\sigma))_{\ell \leq q},(\rho_p(\sigma))_{p>q})$ and $((1)_{\ell \leq q},(\rho_p(\sigma^{-1}\sigma'))_{p>q})$ belong to $\mathrm{im}(\rho)$ and $\mathcal B$, respectively, so $((\rho_{\ell}(\sigma))_{\ell \leq q},(\rho_p(\sigma'))_{p>q})$ belongs to $\mathrm{im}(\rho)$. 

Note that $\prod_{p>q}\mathcal H_p = \mathrm{im}\left(\prod_{p>q}\rho_p\right)$, and for $\infty \geq q_2\geq q_1>q$,
\begin{align*}
    \mathrm{im}\bigg(\prod_{\ell \leq q}\rho_{\ell} \times \prod_{q_1\leq p <q_2}\rho_p\bigg)=\mathrm{im}\bigg(\prod_{\ell \leq q}\rho_{\ell}\bigg)\times \prod_{q_1 \leq p <q_2}\mathcal H_p.
\end{align*}

Let $\mathcal U:=\mathrm{im}\left(\prod_{\ell \leq q}\rho_{\ell}\right)$. It can be written as $\mathcal U' \cap \prod_{\ell \leq q} G(\mathbb Q_{\ell})$ for an open subgroup $\mathcal U'$ of $\prod_{\ell \leq q} \mathrm{GL}_2^r(\mathbb Z_{\ell})$. We have a natural injection
\begin{align*}
    \prod_{\ell \leq q}(G(\mathbb Q_{\ell}) \cap \mathrm{GL}_2^r(\mathbb Z_{\ell})) / \mathcal U \longrightarrow{} \prod_{\ell \leq q} \mathrm{GL}_2^r(\mathbb Z_{\ell})/\mathcal U',
\end{align*}
so
\begin{align*}
    \left[\prod_{\ell \leq q}(G(\mathbb Q_{\ell}) \cap \mathrm{GL}_2^r(\mathbb Z_{\ell})): \mathcal U\right] \leq \left[\prod_{\ell \leq q} \mathrm{GL}_2^r(\mathbb Z_{\ell}):\mathcal U'\right].
\end{align*}
Since $\prod_{\ell \leq q} \mathrm{GL}_2^r(\mathbb Z_{\ell})$ is a locally profinite group with a fundamental system of neighborhoods (of $1$) consisting of open compact subgroups with finite indices, we obtain
\begin{align*}
     \left[\prod_{\ell \leq q}(G(\mathbb Q_{\ell}) \cap \mathrm{GL}_2^r(\mathbb Z_{\ell})) \times \prod_{p>q} \mathcal H_p: \mathrm{im}(\rho)\right]=\left[\prod_{\ell \leq q}(G(\mathbb Q_{\ell}) \cap \mathrm{GL}_2^r(\mathbb Z_{\ell})): \mathcal U\right]<\infty.
\end{align*}
Denote this number by $M_{f_1,\ldots,f_r}$.

For a positive integer $\delta$, the mod $\delta$ Galois representation
\begin{align*}
    \bar{\rho}_{\delta}:\mathrm{Gal}_{\mathbb Q} \longrightarrow \mathrm{GL}_2^r(\mathbb Z/\delta\mathbb Z)=\prod_{\ell^n \mid\mid \delta} \mathrm{GL}_2^r(\mathbb Z/\ell^n \mathbb Z)
\end{align*}
is given by the mod $\delta$-reduction of $\rho$. Thus we may write
\begin{align*}
    \bar{\rho}_{\delta}=\prod_{\ell^n \mid\mid \delta} \bar{\rho}_{\ell^n}.
\end{align*}
Let $\delta'$ be the squarefree part of $\delta$, that is, the squarefree integer whose prime factors coincide with the prime factors of $\delta$. 
Write $\rho_{\delta'} := \prod_{\ell|\delta'} \rho_\ell$ and denote by $\mathcal A_{\ell^n}'$ the image of $G(\mathbb Q_{\ell}) \cap \mathrm{GL}_2^r(\mathbb Z_{\ell})$ under the mod $\ell^n$-reduction. Then, we have the following commutative diagram.

\[\begin{tikzcd}
	{\mathrm{im}(\rho_{\delta'})} & {} & \begin{array}{c} \prod_{\substack{\ell^n \mid\mid \delta \\ \ell \leq q}}(G(\mathbb Q_{\ell}) \cap \mathrm{GL}_2^r(\mathbb Z_{\ell})) \times \prod_{\substack{p^m \mid\mid \delta \\ p>q}} \mathcal H_p \end{array} \\
	\\
	{\mathrm{im}(\bar{\rho}_{\delta})} && \begin{array}{c} \prod_{\substack{\ell^n \mid\mid \delta \\ \ell \leq q}}\mathcal A_{\ell^n}' \times \prod_{\substack{p^m \mid\mid \delta \\ p>q}} \mathcal A_{p^m} \end{array}
	\arrow[hook, from=1-1, to=1-3]
	\arrow[two heads, from=1-1, to=3-1]
	\arrow[two heads, from=1-3, to=3-3]
	\arrow[hook, from=3-1, to=3-3]
\end{tikzcd}\]
Here, the vertical maps are mod $\delta$-reductions and
\begin{align*}
    \mathcal A_{p^m}:=\{(g_1,\ldots,g_r) \in \mathrm{GL}_2^r(\mathbb Z/p^m\mathbb Z):\det g_i=v^{k_i-1} \text{ for some $v \in (\mathbb Z/p^m\mathbb Z)^{\times}$}\}.
\end{align*}
As we have $(p,k_i-1)=1$ for $p>q$ under our enlargement of $q$, the mod $p^m$-reduction from $\mathcal H_p$ surjects to $\mathcal A_{p^m}$ according to Hensel's lemma.

From the diagram, we obtain a surjection
\begin{align*}
    \left(\prod_{\substack{\ell^n \mid\mid \delta \\ \ell \leq q}}(G(\mathbb Q_{\ell}) \cap \mathrm{GL}_2^r(\mathbb Z_{\ell})) \times \prod_{\substack{p^m \mid\mid \delta \\ p>q}} \mathcal H_p\right)/\mathrm{im}(\rho_{\delta'}) \longrightarrow \left(\prod_{\substack{\ell^n \mid\mid \delta \\ \ell \leq q}}\mathcal A_{\ell^n}' \times \prod_{\substack{p^m \mid\mid \delta \\ p>q}} \mathcal A_{p^m}\right)/\mathrm{im}(\bar{\rho_{\delta}})
\end{align*}
and consequently, 
\begin{align*}
    \left[\prod_{\substack{\ell^n \mid\mid \delta \\ \ell \leq q}}\mathcal A_{\ell^n}' \times \prod_{\substack{p^m \mid\mid \delta \\ p>q}} \mathcal A_{p^m}:\mathrm{im}(\bar{\rho_{\delta}})\right] &\leq \left[\prod_{\substack{\ell^n \mid\mid \delta \\ \ell \leq q}}(G(\mathbb Q_{\ell}) \cap \mathrm{GL}_2^r(\mathbb Z_{\ell})) \times \prod_{\substack{p^m \mid\mid \delta \\ p>q}} \mathcal H_p:\mathrm{im}(\rho_{\delta'})\right] \\
    &\leq \left[\prod_{\ell\leq q}(G(\mathbb Q_{\ell}) \cap \mathrm{GL}_2^r(\mathbb Z_{\ell})) \times \prod_{\substack{p^m \mid\mid \delta \\ p>q}} \mathcal H_p:\mathrm{im}(\rho_{\delta'})\times \prod_{\substack{\hat{\ell} \nmid \delta \\ \hat{\ell}\leq q}}\mathrm{im}(\rho_{\hat{\ell}})\right] \\
    &\leq \left[\prod_{\ell\leq q}(G(\mathbb Q_{\ell}) \cap \mathrm{GL}_2^r(\mathbb Z_{\ell})) \times \prod_{\substack{p^m \mid\mid \delta \\ p>q}} \mathcal H_p:\mathrm{im}\left(\prod_{\ell \leq q}\rho_{\ell} \times \prod_{\substack{p^m \mid\mid \delta \\ p> q}}\rho_{\hat{\ell}}\right)\right] \\
    &=\left[\prod_{\ell\leq q}(G(\mathbb Q_{\ell}) \cap \mathrm{GL}_2^r(\mathbb Z_{\ell})) \times \prod_{\substack{p^m \mid\mid \delta \\ p>q}} \mathcal H_p:\mathrm{im}\left(\prod_{\ell \leq q}\rho_{\ell}\right)\right]=M_{f_1,\ldots,f_r}.
\end{align*}
Altogether, we have
\begin{align}\label{eqn_leq for 1/|im|}
    \frac{1}{|\mathrm{im}(\bar{\rho}_{\delta})|} \leq M_{f_1,\ldots,f_r} \prod_{\substack{\ell^n \mid\mid \delta \\ \ell \leq q}}\frac{1}{|\mathcal A_{\ell^n}'|}\times \prod_{\substack{p^m \mid\mid \delta \\ p>q}}\frac{1}{|\mathcal A_{p^m}|}.
\end{align}

\begin{theorem}\label{thm_appendix main}
Let $F \in \mathbb Z[T_1,\ldots,T_r]$ be a linear polynomial. Define 
\begin{align*}
    \mathcal C_{\delta}:=\{(g_1,\ldots,g_r)\in\mathrm{im}(\bar{\rho_{\delta}}): F(\mathrm{tr}(g_1),\ldots,\mathrm{tr}(g_r))=0\}.
\end{align*}
and 
\begin{align*}
    h(\delta):=\frac{|\mathcal C_{\delta}|}{|\mathrm{im}(\bar{\rho}_{\delta})|}. 
\end{align*}    
Then, 
\begin{align*}
    h(\delta) \asymp_{f_1,\ldots,f_r} \prod_{\substack{p^m \mid\mid \delta \\ p>q}} h(p^m).
\end{align*}
\end{theorem}

% Let 
% \begin{align*}
%     \mathcal C_{\delta}:=\{(g_1,\ldots,g_r)\in\mathrm{im}(\bar{\rho_{\delta}}): \cmt{F [[\mbox{What's $F$?}]]}(\mathrm{tr}(g_1),\ldots,\mathrm{tr}(g_r))=0\}
% \end{align*}
% and $h(\delta):=|\mathcal C_{\delta}|/|\mathrm{im}(\bar{\rho}_{\delta})|$. 
One can easily verify that the natural map
\begin{align*}
    \mathcal C_{\delta_1 \delta_2} \longrightarrow \mathcal C_{\delta_1}\times \mathcal C_{\delta_2}
\end{align*}
is injective, so $|\mathcal C_{\delta_1 \delta_2}| \leq |\mathcal C_{\delta_1}||\mathcal C_{\delta_2}|$. By combining with \eqref{eqn_leq for 1/|im|}, we have
\begin{align*}
    h(\delta)\leq M_{f_1,\ldots,f_r} \prod_{\substack{\ell^n \mid\mid \delta \\ \ell \leq q}}\frac{|\mathcal C_{\ell^n}|}{|\mathcal A_{\ell^n}'|}\times \prod_{\substack{p^m \mid\mid \delta \\ p>q}}\frac{|\mathcal C_{p^m}|}{|\mathcal A_{p^m}|} \leq M_{f_1,\ldots,f_r} \prod_{\substack{p^m \mid\mid \delta \\ p>q}}\frac{|\mathcal C_{p^m}|}{|\mathcal A_{p^m}|}=M_{f_1,\ldots,f_r}\prod_{\substack{p^m \mid\mid \delta \\ p>q}}h(p^m).
\end{align*}
On the other hand, one has
\begin{align*}
    h(\delta)=|\mathcal C_{\delta}|/|\mathrm{im}(\bar{\rho}_{\delta})| \geq\mathcal |\mathcal C_{\delta}|\prod_{p^m \mid\mid \delta} \frac{1}{|\mathrm{im}(\bar{\rho}_{p^m})|} \geq \prod_{\substack{\ell^n \mid\mid \delta \\ \ell \leq q}} \frac{1}{|\mathrm{im}(\bar{\rho}_{\ell^n})|}\prod_{\substack{p^m \mid\mid \delta \\ p>q}}h(p^m)=:N_{f_1,\ldots,f_r}\prod_{\substack{p^m \mid\mid \delta \\ p>q}} h(p^m),
\end{align*}
where $N_{f_1,\ldots,f_r}$ is a positive constant depending on $f_1,\ldots, f_r$ only.

Therefore, 
\begin{align*}
    h(\delta) \asymp_{f_1,\ldots,f_r} \prod_{\substack{p^m \mid\mid \delta \\ p>q}} h(p^m),
\end{align*}
implying the formula in \eqref{eqn_h formula}. Our proof is thus complete.

\end{document}